\documentclass[oneside,open=any, abstract=on, paper=a4, fontsize=11pt]{scrartcl}
\pdfoutput=1

\usepackage[a4paper,top=35mm,bottom=30mm,left=25mm,right=25mm]{geometry}
\usepackage[english]{babel}
\usepackage[utf8]{inputenc}

\usepackage{xargs} 
\usepackage[pdftex,dvipsnames]{xcolor}  
\usepackage{graphicx} 
\usepackage{amssymb}
\usepackage{amsmath}
\usepackage{amsthm} 
\usepackage{mathtools} 
\usepackage{bm} 
\usepackage[super]{nth} 
\usepackage[sort, numbers, square]{natbib} 
\usepackage{physics} 
\usepackage{enumitem} 
\usepackage{tabularx} 
\usepackage{caption} 
\usepackage[position=b]{subcaption} 
\usepackage{tikz} 
\usepackage{tikz-cd}
\usepackage{tkz-euclide} 
\usepackage{pgfplots} 

\usepackage{csquotes} 
\usepackage{aliascnt} 
\usepackage[outline]{contour} 

\usepackage{bbm} 
\usepackage{tikz-cd} 
\usepackage[activate=true,final,tracking=true,kerning=true,factor=1100,stretch=10,shrink=10]{microtype} 
\usepackage{xr-hyper} 
\usepackage[pdfauthor={},hidelinks]{hyperref}
\usepackage[ruled,vlined]{algorithm2e}
\usepackage[capitalize,noabbrev]{cleveref} 
\usepackage{autonum} 
\usepackage{xspace}
\usepackage{svg}

\makeatletter
\DeclareRobustCommand\onedot{\futurelet\@let@token\@onedot}
\def\@onedot{\ifx\@let@token.\else.\null\fi\xspace}

\MakeOuterQuote{"}

\usetikzlibrary{positioning, arrows, matrix, decorations.pathmorphing, calc}

\addto\extrasenglish{

}

\setenumerate{label=(\arabic*),itemsep=0mm} 

\newcommand{\cX}{\mathcal{X}}
\newcommand{\cY}{\mathcal{Y}}
\newcommand{\R}{\mathbb{R}}
\newcommand{\N}{\mathbb{N}}

\newcommand{\E}{\mathbb{E}}

\newcommand{\cR}{\mathcal{R}}
\newcommand{\cO}{\mathcal{O}}
\newcommand{\cH}{\mathcal{H}}
\newcommand{\cF}{\mathcal{F}}
\newcommand{\cS}{\mathcal{S}}
\newcommand{\cM}{\mathcal{M}}

\newcommand{\cV}{\mathcal{V}}
\newcommand{\cT}{\mathcal{T}}
\newcommand{\cL}{\ensuremath{\mathcal{L}}}

\newcommand{\QXY}{\ensuremath{\mu}}
\newcommand{\QX}{\ensuremath{\QXY_\cX}}

\renewcommand{\epsilon}{\varepsilon}
\renewcommand{\phi}{\varphi}
\renewcommand{\grad}{\nabla}
\renewcommand{\Im}{\operatorname{Im}}
\renewcommand{\vec}{\ensuremath{\mathrm{vec}}}

\DeclareMathOperator*{\rgrad}{grad}
\DeclareMathOperator*{\ngrad}{ngrad}

\DeclareMathOperator*{\argmin}{arg\,min}

\newcommand{\T}{\mathrm{T}}
\newcommand{\rR}{\mathrm{R}}

\newcommand{\inner}[2]{\langle #1, #2 \rangle}

\DeclareMathOperator{\diag}{diag}

\theoremstyle{definition}
\newtheorem{definition}{Definition}[section]
\newtheorem{example}[definition]{Example}
\newtheorem*{example*}{Example}

\newtheorem{remark*}{Remark}
\newtheorem{remark}[definition]{Remark}
\theoremstyle{theorem}
\newtheorem{lemma}[definition]{Lemma}
\newtheorem*{lemma*}{Lemma}
\newtheorem{corollary}[definition]{Corollary}

\newtheorem{proposition}[definition]{Proposition}

\DeclareMathOperator{\Span}{span}

\def\wrt{w.r.t\onedot}
\def\eg{e.g\onedot}
\def\ie{i.e\onedot}

\numberwithin{equation}{section}

\title{Natural Riemannian gradient for learning functional tensor networks}
\author{
	Nikolas Klug\thanks{Institute of Mathematics, University of Augsburg, 86159 Augsburg, Germany} 
	\qquad Michael Ulbrich\thanks{Department of Mathematics, Technical University of Munich, 85748 Garching b.~München, Germany} \\
	Andr\'e Uschmajew\thanks{Institute of Mathematics \& Centre for Advanced Analytics and Predictive Sciences, University of Augsburg, 86159 Augsburg, Germany}  
	\qquad Marius Willner${}^\ast$
	}
\date{\vspace{0mm}}

\begin{document}
	\maketitle	
\begin{abstract}
	We consider machine learning tasks with low-rank functional tree tensor networks (TTN) as the learning model.
	While in the case of least-squares regression, low-rank functional TTNs can be efficiently optimized using alternating optimization, this is not directly possible in other problems, such as multinomial logistic regression.
	We propose a natural Riemannian gradient descent type approach applicable to arbitrary losses which is based on the natural gradient by Amari.
	In particular, the search direction obtained by the natural gradient is independent of the choice of basis of the underlying functional tensor product space.
	Our framework applies to both the factorized and manifold-based approach for representing the functional TTN.
	For practical application, we propose a hierarchy of efficient approximations to the true natural Riemannian gradient for computing the updates in the parameter space.
	Numerical experiments confirm our theoretical findings on common classification datasets and show that using natural Riemannian gradient descent for learning considerably improves convergence behavior when compared to standard Riemannian gradient methods.
\end{abstract}

\section{Introduction}
	Many machine learning methods are based on (empirical) risk minimization.
	Let $\cX \subseteq \R^{d_x}$, $\cY \subseteq \R ^{d_y}$, $\QXY$ be a joint probability measure on $\cX \times \cY$ and $\cH$ be a set of hypotheses, also called the \emph{learning model}. The goal is to find $h \in \cH$ which minimizes the \emph{risk} $\mathcal R$, defined as
	\begin{equation}\label{eq:risk}
		\cR: \cH \to \R\ , \quad
		\cR(h) \coloneqq \E_{(x, y) \sim \QXY}[\ell(h, x, y)]
		= \int_{\cX \times \cY}  \ell(h, x, y) \QXY(dx, dy) \ .
	\end{equation}
	Here, $\ell: \cH \times \cX \times \cY \to \R$ is called \emph{loss function} and must be sufficiently regular, that is, $\ell(h, \cdot, \cdot) \in L_1(\cX \times \cY, \QXY)$ must hold for all $h \in \cH$.
	Usually, $\ell$ takes only nonnegative values.
	In this work we focus on the case where $\cH$ is a (finite-dimensional) real differentiable Riemannian manifold, that is, we consider the problem
	\begin{equation}\label{eq:problem}
		\text{Find }\quad h^\ast \in \argmin_{h \in \cH} \cR(h) \ .
	\end{equation}
	In practice, the manifold $\cH$ is usually accessed through a parametrization $F: \cM \to \cH$, where~$\cM$ is another, more tractable (finite-dimensional) Riemannian manifold.
	The parametrization $F$ is usually not unique; there can be many different parametrizations and a ``bad'' choice can severely impact the behavior of optimization algorithms, in particular first-order methods based on gradient descent.
	A popular approach to address the influence of the parametrization $F$ is the concept of the natural gradient~\cite{amari98}.

	Several common classes of learning models $\mathcal{H}$ are used in machine learning.
	Neural networks in various flavors belong to the most prominent examples and, depending on the problem, have proven to be quite successful.
	In this work, we consider a different class of learning models based on low-rank functional tree tensor networks (TTNs).
	These models represent functions $f: \Omega \subseteq \R^d \to \R^{n_0}$ in the form
	\begin{equation}\label{eq:hypo-class}
		f(x) = \inner{\mathsf{A}}{\Phi(x)}_{1, \ldots, d} \ ,
	\end{equation}
	where $\mathsf{A} \in \R^{n_0 \times \cdots \times n_d}$ is an order-$(d+1)$ tensor, $\Phi: \Omega \subseteq \R^{d} \to \R^{n_1 \times \cdots \times n_d}$ is a feature map corresponding to point evaluations in a tensor product basis (see \cref{sec: functional TN model}), and $\inner{\cdot}{\cdot}_{1, \ldots, d}$ denotes tensor contractions along the indices $1, \ldots, d$.
	Because of high-dimensionality one can usually not allow arbitrary coefficient tensors $\mathsf{A}$ in practice. 
	In low-rank models one hence restricts $\mathsf{A}$ to tensors in certain low-rank tensor decompositions which can be efficiently stored in memory and, moreover, allow an efficient computation of tensor contractions.
	In this work, we consider tensors contained in fixed-rank tree tensor network manifolds $\cM$, treated in this work mostly as the quotient manifold \wrt a multilinear parametrization.
	Note that, the learning model given by~\eqref{eq:hypo-class} is fairly simple in the sense that it is linear in the parameter tensor $\mathsf{A}$, but nonetheless possesses high expressivity because it parameterizes functions in high-dimensional tensor product spaces, depending on the choice of $\Phi$.
	By restricting the coefficient tensor $\mathsf{A}$ to a low-rank manifold, the learning model becomes nonlinear. 
	While the effect of this restriction to the expressivity is in general difficult to assess rigorously, functional tensor models can offer a more systematic approach to machine learning problems because the overall mathematical theory for tensor methods is already well-developed.

	Restricting the tensor $\mathsf{A}$ in~\eqref{eq:hypo-class} to a fixed-rank manifold $\cM$ naturally yields a parametrization $F : \cM \to \cH$ of the functional tensor network (FTN) learning model.
	In this work we show how to efficiently compute natural gradients for such parameterized low-rank FTN models.
	We develop the theory along two typical machine learning problems: least-squares regression and classification via multinomial logistic regression. 
	In (least-squares) regression, on usually assumes a functional relationship between $\cX$ and $\cY$, which means that for each $x \in \cX$, there is a unique $y = y(x)$.
	The hypotheses are functions $h: \cX \to \cY$ and the goal is to find
	\begin{equation}
		 h^\ast \in \argmin_{h \in \cH} \E_{x \sim \QX}\left[\norm{h(x) - y(x)}_\cY^2\right] \ ,
	\end{equation}
	where $\QX$ is the marginal probability measure of $\QXY$ on $\cX$.
	For such problems, a natural gradient descent approach for compositional functional tensor trains was recently proposed in~\cite{eigel25}.
	
	In classification, the vectors in $\cX$ are to be classified into one of $n_0$ classes.
	For simplicity we assume unique true labels, that is, for every $x \in \cX$ there is again a unique $y = y(x)$ (although our approach is also applicable to the more general case).
	In multinomial logistic regression (also known as softmax regression), the loss function is the negative log-likelihood of a categorical distribution, which results in the problem to find
	\begin{equation}
		 h^\ast \in \argmin_{h \in \cH} \E_{x \sim \QX}\left[-\sum_{j = 1}^{n_0} y(x)_j \log(h(x)_j)\right] \ ,
	\end{equation}
	where $y(x) \in \{0, 1\}^{n_0}$ with $y(x)_j = 1$ iff $x$ belongs to class $j$.
	
	Note that whereas in least-squares regression, standard low-rank tensor algorithms based on alternating least-squares optimization are possible and are likely to outperform gradient based methods, this is no longer the case for multinomial logistic regression:
	Here, the subproblems for the individual cores are not linear least-squares problems anymore, necessitating different algorithms such as (Riemannian) gradient descent.

	\subsubsection*{Contributions}

	In this work, we investigate how natural gradient descent can be efficiently applied to machine learning tasks with low-rank FTNs as the learning model.
	In order to rigorously account for the fact, that the parametrization of the learning model $\cH$ can itself be defined on a manifold $\cM$, we first present a self-contained derivation of the natural gradient descent algorithm in a Riemannian framework.
	The idea behind this is to leave some flexibility regarding the actual optimization methods used the parameter manifold $\cM$, explicitly enabling the tools from Riemannian optimization~\cite{absil2008,boumal23}.
	In particular, while we consider fixed-rank tree tensor networks through the quotient manifold $\cM$ in the space of tensor factors, one could in principle also consider the embedded manifolds in tensor space(see \cref{sec: functional TN model}).
	We therefore adopt the terminology of \emph{natural Riemannian gradient} throughout this work.

	Within our framework, we then derive formulas for computing and approximating the natural Riemannian gradient both for least-squares regression and the multinomial logistic regression setting for classification.
	In the context of (low-rank) FTNs computing the natural Riemannian gradient poses a central challenge, since one has to solve a linear system (see~\cref{eq:ngrad}) which can be extremely large even for moderately sized models.
	To address this problem, we propose a hierarchy of heuristic approximations, which balance approximation quality and computational cost.
	At their core, our heuristics are driven by a block-diagonalization of the linear system, leveraging the multilinear parametrization of the underlying FTN.
	The resulting algorithms are applicable to the deterministic learning setting, where all samples can be treated in one batch.
	In addition, we propose an algorithm for stochastic natural Riemannian gradient descent, where further consideration is required to obtain stable estimates for the natural gradient.
	While the stochastic version of our algorithm is currently based on an ad hoc approach and not systematically developed, we consider it an additional contribution of our work.

	In the numerical experiments we compare the proposed algorithms with standard Riemannian gradient descent in practice.
	For least-squares regression, we conduct tests for a recovery problem on a toy dataset and validate that the choice of the tensor product basis significantly influences the convergence rate for standard Riemannian gradient descent.
	In comparison, our natural Riemannian gradient approach reduces this effect considerably and requires a fewer number iterations, although in its deterministic version these are more expensive. For experiments in a more realistic setting, we test our deterministic and stochastic algorithms on two standard classification datasets, \texttt{digits} and \texttt{MNIST}.
	Again, we observe that the algorithms based on natural Riemannian gradient lead to considerably faster convergence \wrt the number of iterations.
	However, even with approximations, natural Riemannian gradient descent still comes at a higher computational cost per iterations than standard Riemannian gradient descent.
	Despite this, we can achieve faster convergence \wrt absolute time.
	Moreover, in the stochastic regime, our proposed algorithm not only converges faster but also results in improved test accuracy.

	\subsubsection*{Related work}

	The concept of a natural gradient as a descent direction based on the (Riemannian) geometry of statistical learning models was introduced by \citet{amari98} and is nowadays well known.
	We also refer to the more recent survey by \citet{martens20} on the natural gradient, and the considerations about K-FAC \cite{martens15}, which inspired parts of the algorithmic design in this work.
	Further concerning algorithmics, one of our fully-diagonal approximation schemes is related but not equivalent to the Fisher ADAM algorithm used in \cite{hwang24}.
	Recently, a formula for evaluating natural Riemannian gradients was proposed by \cite{hu24}, who justify their method using tools from information geometry in a bottom-up approach.
	This is different from our top-down treatment, where the natural gradient arises naturally from functional considerations.
	
	Functional low-rank tensor formats have been initially proposed for high-dimensional applications in quantum chemistry and PDEs; see the survey articles~\cite{Grasedyck13,Bachmayr16,Bachmayr23} and monographs~\cite{Hackbusch19,Khoromskij18}.
	In particular, tree tensor networks come in various flavors, the notable examples being the Tucker format, the tensor train (TT) format~\cite{oseledets11} and the hierarchical Tucker (HT) format~\cite{HackbuschKuehn09}.

	Beginning with seminal works such as \cite{Stoudenmire2016,Novikov18}, low-rank functional tensor networks have received increasing attention for machine learning and are still an active field of research.
	The work \cite{Stoudenmire2016}, similar as several subsequent ones, e.g.~\cite{Chen2018,gorodetsky18,Klus2019}, followed an alternating optimization approach similar to DMRG type algorithms for quantum systems.
	Notably, for the special case of the least-squares loss, subproblems can be solved optimally.
	While this is not possible for, e.g.~classification via logistic regression, they still can be treated via alternating optimization by applying nonlinear solvers to subproblems~\cite{Chen2018}.
	Recently, another approach has been taken in \cite{yamauchi25} by via an expectation-maximization alternating least squares algorithm.

	In contrast, \cite{Novikov18} directly employed Riemannian gradient descent on the fixed-rank TT manifold.
	Such algorithms are based on the manifold properties of fixed-rank tree tensor networks as established in~\cite{UschmajewVandereycken13} and had been initially successfully applied for tensor completion problems~\cite{Kressner14,SilvaHerrmann15,steinlechner16} and also eigenvalue problems~\cite{Rakhuba19}.
	The more recent work by \citet{willner25} provides a framework for Riemannian gradient descent based on the quotient manifold formalism for functional tensor networks, on which the algorithmic of this work is based.

	Among the mentioned references, let us particularly highlight the work by \citet{SilvaHerrmann15}, where for the problem of tensor completion in the hierarchical Tucker format a Gauss-Newton-based algorithm is suggested based on a quotient manifold formalism.
	This shares similarity with our approach in the sense that it aims at undoing the reparametrization effect.
	However, their work does not consider functional tensor networks and therefore does not capture the natural gradient in a statistical sense.
	The treatment of FTNs is more involved because of the extra layer of complexity added by the feature map $\Phi$ in \cref{eq:hypo-class}; see also the discussion at the end of \cref{sec:least_squares_ftn}.
	In addition, the empirical approximation with samples requires further considerations which we develop in this work.

	There is also recent interest in compositional FTNs~\cite{schneideroster24}.
	These models compose several functional tensor networks (\eg functional tensor trains) as layers, similar to neural networks, to form one larger model.
	For compositional functional tensor networks, natural gradient descent was considered in the recent work \cite{eigel25}, where the authors compute the natural gradient on the embedded manifold of low-rank tensor coefficients for least-squares problems.
	Compared to~\cite{eigel25}, we do not consider composition of tensor networks, but solely focus on the core problem of minimizing a function on a functional low-rank tensor manifold from a Riemannian perspective.
	Moreover, we compute the natural gradient directly through the multilinear parametrization of the tensor network, which can usually be implemented more conveniently.

	\subsubsection*{Outline}

	The work is structured as follows. In \cref{sec: Natural Riemannian gradient descent} we present a self-contained derivation of the natural Riemannian gradient descent algorithm for empirical risk minimization and its empirical counter part. The main formulas are \eqref{eq:ngrad} and \eqref{eq:empirical linear equation}, respectively, describing the Gauss--Newton type linear systems that need to be solved for computing the natural Riemannian gradient.

	In \cref{sec: Natural Gradient for functional tensor network manifolds} we then apply the concepts to optimization on functional low-rank tensor manifolds. 
	These manifolds are introduced in \cref{sec: functional TN model} and \cref{sec:low-rank-model}. As guiding examples the TT format and the balanced binary tree format are discussed. Notably, our representation of these manifolds is a quotient structure in the space of core tensors (see \cref{rem: quotient formalism}). An interesting aspect of functional tensor models is the choice of basis in the tensor product space, which has an interesting parallel interpretation as feature map into rank-one tensors. Due to an invariance of low-rank tensors under change of tensor product, the particular choice does not affect the learning model but may still influences practical computations as discussed in \cref{sec:low-rank-model}. In \cref{sec:least_squares_ftn} and \cref{sec:logistic-regression} the computation of natural Riemannian gradient is discussed separately for least-squares regression and multinomial regression, respectively. \Cref{sec:approximations} then presents our main practical contributions including block diagonal approximation of the Gauß-Newton system, a heuristic stochastic version natural Riemannian gradient descent employing stabilizing momentum transport and further approximation of block diagonals by scaled identities.

	The numerical experiments are presented in \cref{sec:experiments}. 
	They include a more conceptual study for least-squares recovery of an exact FTN model via in \cref{sec:Least-squares recovery problem}, in which we also inspect the influence of the basis choice. 
	The more challenging classification problems via multinomial logistic regression are treated in \cref{sec:deterministic-logistic-regression} where we also employ the full hierarchy of proposed approximations.

\section{Natural Riemannian gradient descent}\label{sec: Natural Riemannian gradient descent}

	The concept of a natural gradient for parametric optimization has been introduced by Amari~\cite{amari98} and is well understood, in particular in the context of statistical models.
	Mathematically, it shares strong similarities with the idea of Gauß-Newton methods.
	Nevertheless, in order to clearly work out some often omitted details, we present here a rather self-contained derivation from a Riemannian perspective where we view the natural gradient purely as a parametrization-independent descent direction for the empirical risk optimization problem~\eqref{eq:problem}.
	For this, we assume that $\cH$ is a \emph{finite-dimensional} real differentiable manifold with a Riemannian structure.
	This assumption reflects the practical situation that learning models are described by finitely many parameters.
	We denote the tangent space at a point $h \in \cH$ by $T_h \cH$ and the Riemannian metric on $T_h \cH$ by $\inner{\cdot}{\cdot}_h$.
	If $\cH'$ is another real differentiable Riemannian manifold and $f: \cH \to \cH'$ is a differentiable function, we write $Df(x)[\zeta]$ for the derivative of $f$ at the point $x$ in direction $\zeta$, that is, $Df(x): T_x \cH \to T_{f(x)} \cH'$ is the differential of $f$ at $x$.
	From now on we assume that all objects (measures, manifolds, functions, loss etc.) are sufficiently smooth.

	\subsection{Natural Riemannian gradient}\label{sec: natural Riemannian gradient}
	
	In principle, the considerations in this subsection apply to an arbitrary smooth map $\cR : \cH \to \R$.
	Let $\rR_h: T_h \cH \to \cH$ be a retraction for the manifold $\mathcal H$ at a fixed element $h$ (see~\cite{boumal23} for the definition).
	We first consider the linearization of the map $\cR \circ \rR_h: T_h \cH \to \R$.
	Let $\zeta \in T_h \cH$, then
	\begin{align}
		\cR(\rR_h(\zeta))
		&= \cR(\rR_h(0)) + D\cR(\rR_h(0))[D \rR_h(0)[\zeta]] + \cO(\norm{\zeta}^2) \\
		&= \cR(h) + D\cR(\rR_h(0))[\zeta] + \cO(\norm{\zeta}^2) \\
		&= \cR(h) + D\cR(h)[\zeta] + \cO(\norm{\zeta}^2) \ .
	\end{align}
	Similarly to the vector space case, we can obtain a (Riemannian) gradient $\rgrad \cR(h)$ as the Riesz-representative of $D \cR(h)$ with respect to the Riemannian metric $\inner{\cdot}{\cdot}_h$ at $h$, that is
	\begin{equation}
		D \cR(h)[\zeta]
		= \inner{\rgrad \cR(h)}{\zeta}_h \ .
	\end{equation}
	Specifically, for the risk $\cR$ in~\eqref{eq:risk}, the Riemannian gradient at $h$ is given by
	\begin{equation}
		\rgrad \cR(h)
		= \rgrad (\E_{(x, y) \sim \QXY}[\ell_{x, y}])(h)
		= \E_{(x, y) \sim \QXY}[\rgrad \ell_{x, y}(h)]
	\end{equation}
	where $\ell_{x, y}: \cH \to \R$ denotes the map $h \mapsto \ell(h, x, y)$. Here we deliberately interchanged the gradient with the expectation based on our assumption that all objects are sufficiently smooth. Note that different Riemannian metrics can result in different Riemannian gradients.

	The negative Riemannian gradient $- \grad \cR(h)$ at a point $h$ provides the direction of steepest descent for the function $\cR$ with respect to the geometry of the manifold $\cH$. 
	A common difficulty in practice is that the elements of $\cH$ can only be accessed through a parametrization $h = F(\Theta)$, where $F: \cM \to \cH$ and $\cM$ is another Riemannian manifold (possibly a linear space) and $\Theta \in \cM$ are the parameters of the model.
	Hence, for practical implementation of optimization algorithms on $\cH$, we need to express all objects in terms of $\Theta$ and $F$.
	In the following, we always assume that the parametrization $F$ is surjective (allowing for overparametrization) and a submersion, that is, $DF(\Theta)$ is surjective for all $\Theta$.

	Let $h = F(\Theta)$.
	All tangent vectors in $T_h \cH$ are expressed via $DF(\Theta)[\zeta]$ for some $\zeta \in T_\Theta \cM$.
	Therefore, if we wish to express the Riemannian gradient $\grad \cR(h)$ in the parameter space, we need to solve the following problem:
	\begin{equation}\label{eq:param-tangent-vector}
		\text{Find }\quad 
		\zeta \in T_\Theta \cM 
		\quad\text{~s.t.~}\quad 
		DF(\Theta)[\zeta] = \rgrad \mathcal{R}(F(\Theta)) \ .
	\end{equation}
	This problem is well-defined since $F$ is a submersion, although $\zeta$ need not be unique.
	On the other hand, we usually do not have explicit access to $\rgrad \mathcal{R}(h) = \rgrad \mathcal{R}(F(\Theta))$ but can only compute $\rgrad (\mathcal{R} \circ F) (\Theta)$.
	Interestingly, the solution of the above problem can be expressed in terms of $\rgrad (\mathcal{R} \circ F) (\Theta)$ only.
	\begin{lemma}
		Assume $DF(\Theta) : T_\Theta \cM \to T_{F(\Theta)}\cH$ is surjective. A vector $\zeta^* \in T_\Theta \cM$ solves~\eqref{eq:param-tangent-vector} if and only if it satisfies
		\begin{equation}\label{eq:ngrad}
			DF(\Theta)^\ast DF(\Theta)[\zeta^\ast] = \rgrad (\cR \circ F) (\Theta) \
		\end{equation}
		where $DF(\Theta)^\ast$ denotes the adjoint of $DF(\Theta) \colon T_{\Theta} \cM \to T_{F(\Theta)} \cH$ with respect to the corresponding metrics.
		A particular solution is
		\begin{equation}\label{eq:ngrad-mp-solution}
			\zeta^\ast = (DF(\Theta)^\ast DF(\Theta))^+[\rgrad (\mathcal{R} \circ F) (\Theta)] \ ,
		\end{equation}
		where $(DF(\Theta)^\ast DF(\Theta))^+$ denotes the Moore-Penrose inverse.
	\end{lemma}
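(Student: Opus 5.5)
The plan is to work entirely in the two fixed finite-dimensional inner product spaces $V = T_\Theta \cM$ and $W = T_{F(\Theta)}\cH$, writing $A = DF(\Theta)\colon V \to W$ and $g = \rgrad \cR(F(\Theta)) \in W$. First I identify the right-hand side of \eqref{eq:ngrad}. By the chain rule, $D(\cR\circ F)(\Theta)[\xi] = D\cR(F(\Theta))[A\xi] = \inner{g}{A\xi}_{F(\Theta)} = \inner{A^\ast g}{\xi}_\Theta$ for all $\xi \in V$. By uniqueness of the Riesz representative this gives
\begin{equation}
	\rgrad(\cR\circ F)(\Theta) = A^\ast g \ .
\end{equation}
Hence \eqref{eq:ngrad} reads $A^\ast A\zeta^\ast = A^\ast g$, while \eqref{eq:param-tangent-vector} reads $A\zeta^\ast = g$.

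Next I prove the equivalence. If $A\zeta^\ast = g$, applying $A^\ast$ immediately gives $A^\ast A\zeta^\ast = A^\ast g$. For the converse I use that $A^\ast$ is injective, because $\ker A^\ast = (\operatorname{ran} A)^\perp = \{0\}$ when $A$ is surjective. Then $A^\ast(A\zeta^\ast - g) = 0$ forces $A\zeta^\ast = g$. This converse is the only step where the surjectivity hypothesis enters; it is the main point to get right, although the argument itself is elementary.

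Finally I show that the Moore--Penrose formula \eqref{eq:ngrad-mp-solution} gives a solution. Let $P$ be the orthogonal projector onto $\operatorname{ran}(A^\ast A)$. For a self-adjoint operator, $(A^\ast A)(A^\ast A)^+ = P$. Moreover $\operatorname{ran}(A^\ast A) = (\ker A^\ast A)^\perp = (\ker A)^\perp = \operatorname{ran} A^\ast$, using $\ker A^\ast A = \ker A$ (from $\inner{A^\ast A\xi}{\xi} = \norm{A\xi}^2$). So the right-hand side $A^\ast g$ lies in $\operatorname{ran}(A^\ast A)$. Therefore
\begin{equation}
	A^\ast A (A^\ast A)^+ A^\ast g = P A^\ast g = A^\ast g \ ,
\end{equation}
and $\zeta^\ast = (A^\ast A)^+ A^\ast g$ solves \eqref{eq:ngrad}, hence also \eqref{eq:param-tangent-vector} by the equivalence above.

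Note that the consistency of \eqref{eq:ngrad} does not even need surjectivity; surjectivity is used only to transfer solutions back to \eqref{eq:param-tangent-vector}.
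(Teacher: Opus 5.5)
Your proof is correct and follows essentially the same route as the paper: the chain rule gives $\rgrad(\cR\circ F)(\Theta) = DF(\Theta)^\ast[\rgrad\cR(F(\Theta))]$, applying the adjoint gives one direction, and surjectivity lets you undo the adjoint for the other. The differences are cosmetic. You undo $DF(\Theta)^\ast$ through its injectivity ($\ker DF(\Theta)^\ast = (\Im DF(\Theta))^\perp = \{0\}$), where the paper uses $(DF(\Theta)^\ast)^+ DF(\Theta)^\ast = DF(\Theta) DF(\Theta)^+ = \mathrm{id}$; and you check the Moore--Penrose formula by showing the normal equations are consistent and then transferring back, whereas the paper inserts it directly into \eqref{eq:param-tangent-vector} via $(DF(\Theta)^\ast DF(\Theta))^+ DF(\Theta)^\ast = DF(\Theta)^+$.
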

	\begin{proof}
		The chain rule gives
		\begin{equation}
			\rgrad (\mathcal{R} \circ F) (\Theta) = D F(\Theta)^\ast [\rgrad \mathcal{R}(F(\Theta))]\ .
		\end{equation}
		Therefore, by applying $D F(\Theta)^\ast$ to both sides of~\eqref{eq:param-tangent-vector} we obtain equation~\eqref{eq:ngrad}.
		On the other hand, when applying $(D F(\Theta)^\ast)^+$ (the Moore-Penrose inverse of $D F(\Theta)^\ast$) to both sides of~\eqref{eq:ngrad}, we obtain~\eqref{eq:param-tangent-vector} because $(D F(\Theta)^\ast)^+ D F(\Theta)^\ast = D F(\Theta) D F(\Theta)^+$ is the identity on $T_{F(\Theta)} \cH = \Im DF(\Theta)$.
		Using this, we can also see that $\zeta^*$ in~\eqref{eq:ngrad-mp-solution} satisfies
		\begin{align}
			DF(\Theta)[\zeta^\ast]
			&= DF(\Theta) (DF(\Theta)^\ast DF(\Theta))^+ DF(\Theta)^\ast [\rgrad \mathcal{R}(F(\Theta))] \\
			&= DF(\Theta) DF(\Theta)^+ [\rgrad \mathcal{R}(F(\Theta))] \\
			&= \rgrad \mathcal{R}(F(\Theta)) \ ,
		\end{align}
		and hence solves~\eqref{eq:param-tangent-vector}.
	\end{proof}
	
	\begin{definition}\label{def: natural Riemannian gradient}
		A vector $\zeta^\ast \in T_\Theta \cM$ is called \emph{natural Riemannian gradient} for $\mathcal{R} : \cH \to \R$ \wrt the parametrization $F: \cM \to \cH$ at $\Theta \in \cM$ if it satisfies~\eqref{eq:ngrad}.
		Any such vector is denoted by
		\begin{equation}
			 \ngrad(\cR \circ F) \coloneqq \zeta^\ast \ .
		\end{equation}
	\end{definition}
	This notation might be considered slightly abusive, since by this definition, the natural Riemannian gradient is in general not unique.
	However, whenever we use it, we will silently assume that a particular vector has been picked. 
	The reason why we did not use, \eg,~\eqref{eq:ngrad-mp-solution} for obtaining a unique definition is that in practice we solve the linear system~\eqref{eq:param-tangent-vector} so we do not know in advance which solution will be found.
	A sufficient condition for uniqueness is that $F$ is a local diffeomorphism.
	However, this is not the case in the settings considered in this work, where the manifolds are parameterized by a multilinear map with inherent (scaling) indeterminacy.
	In some cases, if the parametrization $F$ has a special structure, the natural gradient is identical to the standard Riemannian gradient.
	\begin{corollary}
		\label{cor:ngrad-is-rgrad}
		If $DF(\Theta): T_\Theta \cM \to T_{F(\Theta)} \cH$ is isometric, that is, $\inner{DF(\Theta)[\zeta]}{DF(\Theta)[\xi]}_{F(\Theta)} = \inner{\zeta}{\xi}_\Theta$, then the natural Riemannian gradient is unique and
		\begin{equation}
			\ngrad (\cR \circ F)(\Theta) = \rgrad (\cR \circ F) (\Theta) \ .
		\end{equation}
	\end{corollary}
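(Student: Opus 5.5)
The plan is to show that isometry of $DF(\Theta)$ forces the Gauss--Newton operator $DF(\Theta)^\ast DF(\Theta)$ in~\eqref{eq:ngrad} to be the identity on $T_\Theta \cM$. Then both uniqueness and the identification with $\rgrad(\cR\circ F)(\Theta)$ follow directly from the preceding lemma.

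First, I would rewrite the isometry assumption using the definition of the adjoint. For all $\zeta,\xi \in T_\Theta\cM$,
\begin{equation}
	\inner{DF(\Theta)^\ast DF(\Theta)[\zeta]}{\xi}_\Theta = \inner{DF(\Theta)[\zeta]}{DF(\Theta)[\xi]}_{F(\Theta)} = \inner{\zeta}{\xi}_\Theta \ .
\end{equation}
Since $T_\Theta\cM$ is a finite-dimensional inner product space and $\xi$ is arbitrary, this gives $DF(\Theta)^\ast DF(\Theta)[\zeta] = \zeta$ for every $\zeta$. Hence $DF(\Theta)^\ast DF(\Theta) = \mathrm{id}_{T_\Theta\cM}$.

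Second, I would substitute this into~\eqref{eq:ngrad}. The defining equation of the natural Riemannian gradient then reads $\zeta^\ast = \rgrad(\cR\circ F)(\Theta)$. So the solution set of~\eqref{eq:ngrad} consists of exactly one vector, namely the standard Riemannian gradient. This gives uniqueness in the sense of \cref{def: natural Riemannian gradient} as well as the claimed identity. As a consistency check, I would note that the Moore--Penrose formula~\eqref{eq:ngrad-mp-solution} returns the same vector, because the pseudoinverse of the identity is the identity.

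I do not expect a real obstacle here. The only point needing care is which hypotheses of the lemma apply. The lemma assumes that $DF(\Theta)$ is surjective, and the paper assumes throughout that $F$ is a submersion, so the equivalence between~\eqref{eq:param-tangent-vector} and~\eqref{eq:ngrad} is available. Isometry additionally gives injectivity, so $DF(\Theta)$ is then a linear isometric isomorphism. This is also why the scaling indeterminacy mentioned before the corollary cannot arise in this case.
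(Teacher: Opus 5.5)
Your proposal is correct and follows the same route as the paper, which states the corollary without a separate proof because it is immediate: isometry means exactly $DF(\Theta)^\ast DF(\Theta) = \mathrm{id}$ on $T_\Theta\cM$, so \eqref{eq:ngrad} reduces to $\zeta^\ast = \rgrad(\cR\circ F)(\Theta)$. Your remark on surjectivity is harmless but not needed, since \cref{def: natural Riemannian gradient} defines the natural gradient directly through \eqref{eq:ngrad}.
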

	In \cref{sec:logistic-regression} we briefly show how our definition of the natural Riemannian gradient coincides with that by~\citet{amari98} for a statistical setting.
	Let us also note that in case $F$ is not surjective, a natural gradient can still be defined as the solution to the least-squares problem
	\begin{equation}
		\min_{\zeta \in T_\Theta} \norm{DF(\Theta)[\zeta] - \rgrad \cR(F(\Theta))}_{F(\Theta)}^2 \ ,
	\end{equation}
	which is the approach taken in \cite{eigel25}.
	This will lead to the same formula as in \cref{eq:ngrad-mp-solution}.
	Since we always assume surjectivity, we use directly the formula~\eqref{eq:ngrad}.
	
	In case $\cM$ is a manifold embedded in a finite-dimensional Euclidean ambient space~$\cV$ (in particular~$\cM$ is then equipped with the Riemannian metric inherited from $\cV$), we can compute a natural Riemannian gradient by
	\begin{align}
		\ngrad (\cR \circ F)(\Theta)
		&= (DF(\Theta)^\ast DF(\Theta))^{+}\rgrad (\cR \circ F) (\Theta)\\
		&= (DF(\Theta)^\ast DF(\Theta))^{+} P_{T_\Theta \cM}(\grad (\cR \circ F) (\Theta)) \ , \label{eq:linearization}
	\end{align}
	where $P_{T_\Theta \cM}: \cV \to \T_\Theta \cM$ is the orthogonal projection from the ambient space to the tangent space $T_\Theta \cM$ and $\grad$ denotes the Euclidean gradient in $\cV$.
	
	\begin{remark}[Pullback metric]
		Following the result from \cref{cor:ngrad-is-rgrad}, the natural gradient can also be interpreted as a standard gradient with respect to a special Riemannian metric, also called the pullback metric~\cite{lee18}.
		Consider the following symmetric positive semidefinite bilinear form on $T_\Theta \cM$:
		\begin{equation}
			\inner{\zeta}{\xi}_{\Theta} \coloneqq \inner{DF(\Theta)[\zeta]}{DF(\Theta)[\xi]}_{F(\Theta)} \ . \label{eq:induced-riem-metric}
		\end{equation}
		If $DF(\Theta)$ is injective, $\inner{\cdot}{\cdot}_{\Theta}$ is an inner product (Riemannian metric resp.) on $T_\Theta \cM$.
		By definition, $DF(\Theta)$ is an isometry with respect to the new (semi-)inner product.
		Since in this case we have $\ngrad (\cR \circ F)(\Theta) = \rgrad (\cR \circ F) (\Theta)$, the natural gradient can be interpreted as gradient on $\cM$ with respect to the Riemannian metric~\eqref{eq:induced-riem-metric}.
	\end{remark}

	\begin{remark}[Reparametrization]\label{rem:reparametrization}
	In the case that $\cM$ is a manifold and itself accessed through a parametrization $\tilde{F}: \tilde{\cM} \to \cM$ from another Riemannian manifold $\tilde{\cM}$ (or linear space), the natural gradient changes:
	One then has to solve the system
	\begin{equation}
			(D\tilde{F}(\tilde{\Theta})^\ast DF(\tilde{F}(\tilde{\Theta}))^\ast DF(\tilde{F}(\tilde{\Theta}))D\tilde{F}(\tilde{\Theta}))[\chi] = \rgrad (\cR \circ F \circ \tilde{F}) (\tilde{\Theta}) \ ,
	\end{equation}
	where $\tilde{\Theta} \in \tilde{\cM}$ are the new parameters.
	However, by construction, the effective descent direction in $\cH$ remains unchanged.
	\end{remark}

	\subsection{An idealized algorithm}

	Recall that Riemannian gradient descent for minimizing the cost $\cR$ on $\cH$ constructs from a given point $h \in \cH$ an new point $h_\gamma$ according to
	\begin{equation}
		h_\gamma
		= \rR_{h}(- \gamma \rgrad \cR(h))\ , \label{eq:gradient-step-R-h}
	\end{equation}
	where $\gamma > 0$ is an appropriate step size and $\rR_h$ denotes the retraction for $\cH$ at $h$.
	We can interpret this as selecting the new iterate as a point on the curve $\gamma \mapsto h_\gamma$ on $\cH$, which by our assumptions is well defined and smooth for small $\gamma > 0$.
	It means that the function $\mathcal R$ is decreased according to
	\begin{equation}
		 \cR(h_\gamma) = \cR(h) - \gamma \inner{\rgrad \cR(h)}{\rgrad \cR(h)}_{h} + O(\gamma^2) \ . \label{eq:decrease-R-h}
	\end{equation}

	Natural Riemannian descents mimics this update rule, but instead operates on the parameters~$\Theta$ and uses the natural gradient.
	If $h = F(\Theta)$, this results in the update
	\begin{equation}
		\Theta_\gamma
		= \rR_{\Theta}(- \gamma \ngrad (\cR \circ F)(\Theta))  \ . \label{eq:gradient-step-R-theta}
	\end{equation}
	Note that here $\rR_\Theta$ is a retraction on the manifold $\cM$.
 	By construction, the curves $\gamma \mapsto F(\Theta_\gamma)$ and $\gamma \mapsto h_\gamma$ are then equal in first order, as they they both start at $h$ and their derivatives are equal  at $\gamma =0$ due to~\eqref{eq:param-tangent-vector}:
 	\[
		\frac{\mathrm d}{\mathrm d \gamma} F(\Theta_\gamma) \big\rvert_{\gamma = 0} = - DF(\Theta)[\ngrad (\cR \circ F)(\Theta)] = - \rgrad \cR(F(\Theta)) = - \rgrad \cR(h) = \frac{\mathrm d}{\mathrm d \gamma} h_\gamma \big\rvert_{\gamma = 0} \  .
 	\]
	This also implies
	\begin{equation}
		 \cR(F(\Theta_\gamma)) = \cR(h_\gamma) + O(\gamma^2) = \cR(h) - \gamma \inner{\rgrad \cR(h)}{\rgrad \cR(h)}_{h} + O(\gamma^2).
	\end{equation}
	Thus, when starting at $h = F(\Theta)$, then for a single step both update rules \eqref{eq:gradient-step-R-h} and \eqref{eq:gradient-step-R-theta} are equivalent in first order with respect to the step length.
	Note that they are not equal, since the higher-order terms differ.
	Therefore, even if initialized with matching points, both methods will deviate from each other after several iterations.
	(The corresponding continuous gradient flows are identical though.)

	On a similar note, while the natural Riemannian gradient is in theory invariant to the parametrization in every step, updates corresponding to different parametrizations will only be equal in first order, so result in different optimization dynamics in practice, as also pointed out in~\cite{eigel25}.
	More importantly, different parametrization usually even lead to different \emph{empirical} versions of the linear systems which are discussed next. 
	This means that in practice, search directions are generally not equal in first order.

	The pseudocode of the idealized natural Riemannian gradient descent algorithm is given in \cref{algo:ngd}.

	\begin{algorithm}[t]
		\DontPrintSemicolon
		\SetAlgoLined
		\KwIn{Risk $\cR: \cH \to \R$, Parametrization $F: \cM \to \cH$, Initial point $\Theta^{0} \in \cM$}
		$t \leftarrow 1$\;
		\While{not converged}{
			Compute $\rgrad (\cR \circ F)(\Theta^{(t)})$\;
			Solve
			$\left(DF(\Theta^{(t)})^\ast DF(\Theta^{(t)})\right)[\zeta^{(t)}] = \rgrad (R \circ F) (\Theta^{(t)})$
			\;
			Choose step-size $\gamma^{(t)} > 0$\;
			$\Theta^{(t+1)} \leftarrow \rR_{\Theta^{(t)}}\left(- \gamma^{(t)} \cdot \zeta^{(t)}\right)$\;
			$t \leftarrow t + 1$\;
		}
		\Return $\Theta^{(t)}$\;
		\caption{Natural Riemannian Gradient Descent}
		\label{algo:ngd}
	\end{algorithm}

	\subsection{Empirical version}\label{subsec: emprirical version}

	In practice the expectation in the definition of the (true) risk $\cR$ in~\eqref{eq:risk} can be approximated with an empirical expected value of samples $(x^i, y^i)_{k = 1, \ldots, m}$ from the distribution $\QXY$.
	This so-called \emph{empirical risk} is given as
	\begin{equation}\label{eq:empirical-risk}
		\widehat{\cR}_m(F(\Theta)) = \frac{1}{m} \sum_{i=1}^m \ell(F(\Theta), x^i, y^i)
	\end{equation}
	and thus its Riemannian gradient is
	\begin{equation}\label{eq:grad-empirical-risk}
		\rgrad (\widehat{\cR}_m \circ F)(\Theta) = \frac{1}{m} \sum_{i=1}^m \rgrad \ell(F(\cdot), x^i, y^i)(\Theta) \ .
	\end{equation}

	The natural Riemannian gradient of the empirical risk is obtained by replacing the right hand side of~\eqref{eq:ngrad} by~\eqref{eq:grad-empirical-risk}. 
	Its practical computation, however, still requires the operator $DF(\Theta)^\ast DF(\Theta)  : T_\Theta \cM \to T_\Theta \cM$, or at least its action on tangent vectors.
	This can pose a challenge, since there is oftentimes no closed-form expression available for the adjoint of $DF(\Theta)$ with respect to the given Riemannian metric.
	In many relevant applications (such as the ones considered in this work), the hypotheses are functions $h: \cX \to \R^{n_0}$ in $L_2(\cX, \R^{n_0}; \QX)$ where~$\QX$ denotes the marginal probability measure on $\cX$ \wrt $\QXY$, that is, $\QX(X) = \int_{\cX \times \cY} 1_X(x) \QXY(dx, dy)$ where $1_X$ is the indicator function of $X \subseteq \cX$.
	In this setting, computing the operator $DF(\Theta)^\ast DF(\Theta)$ can be achieved through empirical approximation based on the following familiar fact.
	\begin{proposition}
		\label{prop:dfdf-as-integral}
		Assume $\cH = F(\cM) \subseteq L_2(\cX, \R^{n_0}; \QX)$ is an embedded submanifold that is equipped with the Riemannian metric
		$\inner{\eta}{\chi} = \int_\cX \inner{\eta(x)}{\chi(x)} \QX(dx)$.
		For $\Theta \in \cM$ and $x \in \cX$, let
		$DF_x(\Theta): T_\Theta \cM \to \R^{n_0}$ be defined through $DF_x(\Theta)[\xi] = DF(\Theta)[\xi](x)$ for all $\xi \in T_\Theta \cM$.
		Then
		\begin{equation}
			DF(\Theta)^\ast DF(\Theta) 
			= \int_\cX DF_x(\Theta)^\ast DF_x(\Theta)\QX(dx) 
			= \E_{x \sim \QX}[DF_x(\Theta)^\ast DF_x(\Theta)] \ .
		\end{equation}
	\end{proposition}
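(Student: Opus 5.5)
The plan is to verify the identity weakly: test both sides against arbitrary pairs of tangent vectors $\zeta, \xi \in T_\Theta \cM$ and use the defining property of the adjoint. Both $DF(\Theta)^\ast DF(\Theta)$ and $\E_{x\sim\QX}[DF_x(\Theta)^\ast DF_x(\Theta)]$ are linear operators on the finite-dimensional space $T_\Theta\cM$. Such an operator is determined by its bilinear form $(\zeta,\xi)\mapsto \inner{A\zeta}{\xi}_\Theta$, so it suffices to show that these bilinear forms coincide.

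For the left-hand side, the definition of the adjoint with respect to the metric on $T_\Theta\cM$ and the $L_2$-type metric on $T_{F(\Theta)}\cH$ gives
\begin{equation}
	\inner{DF(\Theta)^\ast DF(\Theta)[\zeta]}{\xi}_\Theta
	= \inner{DF(\Theta)[\zeta]}{DF(\Theta)[\xi]}
	= \int_\cX \inner{DF(\Theta)[\zeta](x)}{DF(\Theta)[\xi](x)} \QX(dx) .
\end{equation}
Since $\cH$ is embedded in $L_2(\cX,\R^{n_0};\QX)$, tangent vectors are themselves elements of $L_2$. Hence the pointwise evaluation $DF(\Theta)[\zeta](x)$ makes sense, and it equals $DF_x(\Theta)[\zeta]$ by definition.

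For each fixed $x$, the map $DF_x(\Theta)\colon T_\Theta\cM\to\R^{n_0}$ is linear between finite-dimensional inner product spaces, so its adjoint exists. The integrand can therefore be rewritten as $\inner{DF_x(\Theta)^\ast DF_x(\Theta)[\zeta]}{\xi}_\Theta$. Because $\xi$ and $\zeta$ do not depend on $x$, I then pull the integral inside the inner product and obtain $\inner{(\int_\cX DF_x(\Theta)^\ast DF_x(\Theta)\,\QX(dx))[\zeta]}{\xi}_\Theta$. Since $\zeta,\xi$ are arbitrary, this proves the claim. The second equality in the statement is simply notation for the integral against the probability measure $\QX$.

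The main point needing care is making sense of the operator-valued integral and justifying the interchange, along with the fact that pointwise evaluation of $L_2$ tangent vectors is only defined $\QX$-a.e. To handle this, I will fix a basis of $T_\Theta\cM$, say orthonormal with respect to its metric. In that basis the operator-valued integral becomes a matrix of scalar integrals $\int_\cX \inner{DF_x(\Theta)[e_j]}{DF_x(\Theta)[e_k]}\QX(dx)$. These integrals are finite by Cauchy–Schwarz, since each $DF(\Theta)[e_j]\in L_2$. By linearity, $DF_x(\Theta)[\zeta]=\sum_j \zeta_j DF(\Theta)[e_j](x)$ holds a.e. for all $\zeta$ simultaneously, and only finitely many null sets are involved. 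The interchange then reduces to linearity of finite sums of integrals, with no further regularity needed beyond the standing smoothness assumptions.
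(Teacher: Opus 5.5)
Your proposal is correct and follows essentially the same route as the paper. The paper also tests against arbitrary $\zeta,\xi \in T_\Theta\cM$, rewrites $\inner{DF(\Theta)[\zeta]}{DF(\Theta)[\xi]}$ as the integral of $\inner{DF_x(\Theta)^\ast DF_x(\Theta)[\zeta]}{\xi}_\Theta$, and pulls the integral out by linearity; your basis-wise justification of the operator-valued integral and of the $\QX$-a.e.\ pointwise evaluation is a careful addition that the paper leaves implicit.
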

	\begin{proof}
		For all $\zeta, \xi \in T_\Theta \cM$ we have
		\begin{align}
			\inner{DF(\Theta)[\zeta]}{DF(\Theta)[\xi]}_{F(\Theta)}
			&= \int_\cX \inner{DF(\Theta)[\zeta](x)}{DF(\Theta)[\xi](x)} \QX(dx) \label{eq:l2-inner-product-df}\\
			&= \int_\cX \inner{DF_x(\Theta)[\zeta]}{DF_x(\Theta)[\xi]} \QX(dx)\\
			&= \int_\cX \inner{DF_x(\Theta)^\ast DF_x(\Theta)[\zeta]}{\xi}_\Theta \QX(dx) \\
			&= \inner{\int_\cX DF_x(\Theta)^\ast DF_x(\Theta)[\zeta] \QX(dx)}{\xi}_\Theta\\
			&= \inner{\left(\int_\cX DF_x(\Theta)^\ast DF_x(\Theta)\QX(dx)\right) [\zeta]}{\xi}_\Theta
		\end{align}
		where both the last and the second to last equality follow from the linearity of the integral.
	\end{proof}
	The empirical approximation of $DF(\Theta)^\ast DF(\Theta)$ suggested by \cref{prop:dfdf-as-integral} hence reads
	\begin{equation}
		DF(\Theta)^\ast DF(\Theta) 
		\approx  \frac{1}{m} \sum_{i = 1}^m DF_{x^i}(\Theta)^\ast DF_{x^i}(\Theta) \label{eq:empirical-dfdf} \ ,
	\end{equation}
	where $x_1,\dots,x_m$ are sampled from the distribution $\mu_\cX$.

	Based on~\eqref{eq:grad-empirical-risk} and~\eqref{eq:empirical-dfdf}, the \emph{empirical natural Riemannian gradient} at $\Theta$ is defined	as the solution $\zeta$ to the linear equation
	\begin{equation}
		\frac{1}{m} \sum_{i = 1}^m DF_{x^i}(\Theta)^\ast DF_{x^i}(\Theta) [\zeta]
		 = \frac{1}{m} \sum_{i=1}^m \rgrad \ell(F(\cdot), x^i, y^i)(\Theta) \ . \label{eq:empirical linear equation}
	\end{equation}
	Let us abbreviate the equation with $\widehat{Z}_m(\Theta)[\zeta] = \rgrad (\widehat{\cR}_m \circ F)(\Theta)$. Note that the right hand side is in the image of the symmetric positive semi-definite operator $\widehat{Z}_m(\Theta)$ on $T_\Theta \cM$. Any solution is of the form $\zeta = \widehat{Z}_m(\Theta)^+ \rgrad (\widehat{\cR}_m \circ F)(\Theta) + \eta$, where $\eta$ is in the null space of $\widehat{Z}_m(\Theta)$ and hence orthogonal to $\rgrad (\widehat{\cR}_m \circ F)(\Theta)$. Therefore, the curve $\Theta_\gamma = \rR_{\Theta}(- \gamma \zeta)$ satisfies
	\begin{align}
	 \widehat{\cR}(F(\Theta_\gamma)) &= \widehat{\cR}(F(\Theta)) - \gamma \inner{\rgrad (\widehat{\cR}_m \circ F)(\Theta)}{\zeta} + O(\gamma^2) \\
	 &= \widehat{\cR}(F(\Theta)) - \gamma \inner{\rgrad (\widehat{\cR}_m \circ F)(\Theta)}{\widehat{Z}_m(\Theta)^+ \rgrad (\widehat{\cR}_m \circ F)(\Theta)}_\Theta + O(\gamma^2) \\
	 &\le \widehat{\cR}(F(\Theta)) - \gamma \frac{1}{\| \widehat{Z}_m(\Theta)\|_\Theta } \inner{\rgrad (\widehat{\cR}_m \circ F)(\Theta)}{\rgrad (\widehat{\cR}_m \circ F)(\Theta)}_\Theta + O(\gamma^2) \ ,
	\end{align}
	where $\| \widehat{Z}_m(\Theta)\|_\Theta$ is the spectral norm on $T_\Theta \cM$. This shows, that $-\zeta$ is a gradient related descent direction for the empirical risk. Furthermore, if the number $m$ of samples goes to infinity then $\widehat{Z}_m(\Theta)$ converges to $DF(\Theta)^\ast DF(\Theta)$ and $\rgrad (\widehat{\cR}_m \circ F)(\Theta)$ converges to $\rgrad (\cR_m \circ F)(\Theta)$. The limiting equation hence matches~\eqref{eq:ngrad}. Therefore, for fixed $\Theta$, any accumulation point of a corresponding sequence of solutions $(\zeta_m)$ to~\eqref{eq:empirical linear equation} will be a natural Riemannian gradient for $\cR$.

	The resulting empirical (but still somewhat idealized) version of \cref{algo:ngd} will not be noted separately, as it essentially looks the same except for obtaining $\zeta^{(t+1)}$ from solving~\eqref{eq:empirical linear equation} at $\Theta^{(t)}$.

\section{Natural Gradient for functional tensor network manifolds}\label{sec: Natural Gradient for functional tensor network manifolds}

	We now specialize the above theory to functional low-rank tensor models and develop a practical version of the natural Riemannian gradient algorithm for the learning problems under consideration.

	\subsection{Functional tensor model}\label{sec: functional TN model}

	Let $V_1, \ldots, V_d$ be finite-dimensional vector spaces of real-valued and continuous univariate functions with
	\begin{equation}
		V_\nu = \Span\{ \phi_1^\nu, \ldots, \phi_{n_\nu}^\nu \} \ ,
	\end{equation}
	where $\phi_{j_\nu}^\nu: \Omega_\nu \to \R$ for all $j_\nu= 1, \ldots, n_\nu$ and $\nu = 1,\dots,d$ and $\Omega_\nu \subseteq \R$.
	We assume that $\phi_1^\nu, \ldots, \phi_{n_\nu}^\nu$ is a basis of $V_\nu$ so that $\dim V_\nu = n_\nu$.
	Further, let
	\[
	 \cV = V_1 \otimes \dots \otimes V_d
	\]
	be the tensor product space and let $n_0 \in \N$.

	In the functional tensor model we consider vector-functions $f \in \cV^{n_0}$, that is, functions
	\[
	 f : \Omega_1 \times \dots \times \Omega_d \to \R^{n_0}, \qquad f(x) = \begin{pmatrix} f_1(x) \\ \vdots \\ f_{n_0}(x) \end{pmatrix} \ ,
	\]
	where every component belongs to $\cV$ and hence can be written as
	\begin{equation}
		f_k(x) = \sum_{j_1, \ldots, j_d}^{n_1, \ldots, n_d} \mathsf{A}_{k, j_1, \ldots, j_d} \phi_{j_1}^1(x_1) \phi_{j_2}^2(x_2) \cdots \phi_{j_d}^d(x_d)\ , \qquad k = 1,\dots,n_0 \ . \label{eq:basis representation}
	\end{equation}
	Here $\mathsf{A} \in \R^{n_0 \times n_1 \times \ldots \times n_d}$ is a real-valued $n_0 \times n_1 \times \dots \times n_d$ tensor containing the basis coefficients for all $f_k$ \wrt the tensor product basis $\varphi_{j_1}^1 \otimes \dots \otimes \varphi_{j_d}^d$ of $\cV$.
	Once these basis functions are fixed, the tensor $\mathsf A$ remains the sole parameter for representing functions in $\cV^{n_0}$.

	The basis representation~\eqref{eq:basis representation} can be written in a more compact form. Let
	\begin{equation}
		\Phi_\nu \coloneqq \begin{pmatrix} \phi_1^\nu \\ \vdots \\ \phi_{n_\nu}^\nu \end{pmatrix} \label{eq: basis compact}
	\end{equation}
	for all $\nu = 1, \ldots, d$ and $\Phi: \Omega_1 \times \dots \times \Omega_d \to \R^{n_1 \times \cdots \times n_d}$ be given by
	\begin{equation}
		\Phi(x) \coloneqq \Phi_1(x_1) \otimes \Phi_2(x_2) \otimes \ldots \otimes \Phi_d(x_d) \ , \label{eq:Phi-definition}
	\end{equation}
	which is a rank-$1$ tensor of functions and can be interpreted as a tensor-valued feature map.
	Every $f \in \cV^{n_0}$ may hence be equivalently written as
	\begin{equation} 
		f
		\coloneqq \inner{\mathsf{A}}{\Phi(\cdot)} 
		\coloneqq \inner{\mathsf{A}}{\Phi(\cdot)}_{1, \ldots, d} 
		= \sum_{j_1, \ldots, j_d}^{n_1, \ldots, n_d} \mathsf{A}_{k, j_1, \ldots, j_\nu} \phi_{j_1}^1(\cdot_1) \phi_{j_2}^2(\cdot_2) \cdots \phi_{j_d}^d(\cdot_d) \ ,
	\end{equation}
	where $\inner{\cdot}{\cdot}_{1, \ldots, d}$ denotes the contraction along modes $1, \ldots, d$.
	
	We give a brief example for $\cV^{n_0}$.
	\begin{example}[Polynomial Basis]
		Let $n \in \N$ and $V_1 = V_2 = \ldots = V_d = \R[x]_n$ be the vector spaces of polynomials up to degree $n$.
		For $V_\nu$ we can, for example, choose the monomial basis functions $\phi_j^\nu(\xi) = \xi^{j-1}$ for $j = 1, \ldots, n+1$.
		For $d=2$ and $n = 2$ this results in the feature map
		\begin{equation}
			\vec(\Phi(x))
			= \vec(\begin{pmatrix}1 \\ x_1 \\ x_1^2\end{pmatrix} \otimes  \begin{pmatrix}1 \\ x_2 \\ x_2^2\end{pmatrix})
			= (1, x_1, x_2, x_1^2, x_2^2, x_1 x_2, x_1^2 x_2, x_2^2 x_1, x_1^2 x_2^2)^\T \ .
		\end{equation}
		Note that $\Phi$ is similar, but not identical to the feature map associated to a polynomial kernel.
		For $d=2$ and $n = 2$, the latter is given by
		\begin{equation}
			\Phi_\mathrm{poly}(x) = (1, x_1, x_2, x_1^2, x_2^2, x_1 x_2)^\T
		\end{equation}
		see \eg \cite{shalev-shw14}.
		It generates all multinomial combinations of $x_1$ and $x_2$ of total degree $\leq 2$.
		In contrast, the feature map $\Phi$ also generates higher-order multinomials of maximum degree $\leq 2$.
	\end{example}
	
	\subsection{Functionals tree tensor networks}\label{sec:low-rank-model}

	Since $\mathsf{A} \in \R^{n_0 \times n_1 \times \cdots \times n_d}$ can easily be too large to be stored, we can usually not take the full linear space $\cV^{n_0}$ as a (linear) learning model.
	Instead, further dimensionality reduction is required.
	Functional low-rank tensor models are based on restricting $\cV^{n_0}$ to a low-dimensional set $\cH$ by restricting $\mathsf{A}$ to be an element of some tractable set $\cT \subset \R^{n_0 \times n_1 \times \ldots n_d}$ of low-rank tensors.
	In this way, the class of hypotheses becomes non-linear.
	In this work, for conceptual reasons we consider only the case that $\cT$ is an embedded \emph{manifold} of low-rank tensors.
	A general class of such manifolds are fixed-rank tree tensor networks (TTNs), which includes fixed-rank versions of the the well-known Tucker format, the tensor train (TT) format~\cite{oseledets11}, or the Hierarchical Tucker (HT) format~\cite{HackbuschKuehn09} as special cases.
	The TT format will be briefly explained further below.
	Manifold properties for the fixed-rank versions of these examples have been worked out in~\cite{Holtzetal12,UschmajewVandereycken13} and are by now well understood.
	The limitation to fixed rank can be addressed by mixing rank-adaptive strategies with fixed-rank schemes.
	In the following, for any such manifold $\cT$, let
	\begin{equation}
		G: \cT \to \cV^{n_0} \ , \quad G(\mathsf{A}) \coloneqq \inner{\mathsf{A}}{\Phi(\cdot)} \ , \label{eq:map G}
	\end{equation}
	and define
	\begin{equation}
	\cH \coloneqq \Im G.
	\end{equation}
	Note that $\cH$ is a manifold since $G$ is a linear isomorphism.
	
	Natural Riemannian gradient descent \wrt the parametrization $G$ could in theory directly be applied on the embedded manifolds $\cT \subseteq \R^{n_0 \times n_1 \times \dots \times n_d}$ using the concepts of Riemannian optimization.
	Specifically, it requires solving~\cref{eq:ngrad} on tangent spaces.
	This is certainly feasible for tree tensor networks.
	For example, for fixed-rank TT manifolds the required machinery has been applied in several works, e.g., in \cite{kressner16, steinlechner16, Novikov18, Rakhuba19, UschmajewVandereycken20}.
	Notably, in~\cite{eigel25}, natural gradient descent has been applied on a TT manifold.
	In this work, we will follow a more direct approach which accounts for the fact that low-rank tensors are usually more conveniently accessed via another parametrization map
	\begin{equation}
		\tau: \cM \to \cT \ , \qquad \Theta \mapsto \tau(\Theta) = \tau(\Theta_1,\dots,\Theta_{d'}) = \mathsf{A} \ , \label{eq:map tau}
	\end{equation}
	where $\cM = U_1 \times U_2 \times \cdots \times U_{d'}$ for some embedded manifolds $U_k \subseteq \R^{m_k}$, $k=1,\dots,d'$.
	The map $\tau$ is usually multilinear in the sense that there is a multilinear map $\tilde{\tau}: \R^{m_1} \times \cdots \times \R^{m_{d'}} \to \mathrm{cl}(\cT)$ such that $\tau = \tilde{\tau}|_\cT$, where $\mathrm{cl}(\cT)$ denotes set closure.
	In particular, the differential $D\tau(\Theta)$ is given through the Leibniz product rule:
	\begin{equation}
	 D \tau(\Theta)[\delta \Theta_1,\dots,\delta \Theta_{d'}] = \tau(\delta \Theta_1,\Theta_2,\dots,\Theta_{d'}) + \dots + \tau(\Theta_1,\dots,\Theta_{d'-1},\delta \Theta_{d'}).
	\end{equation}
	This is itself a sum of low-rank tensors and can be efficiently evaluated in practical implementations in the relevant examples.

	Overall, the parametrization we then use is
	\begin{equation}\label{eq: FTN parametrization}
	 F : \cM \to \cH, \qquad F(\Theta) = (G \circ \tau)(\Theta) = \inner{\tau(\Theta)}{\Phi(\cdot)} \ .
	\end{equation}
	For concreteness, we briefly discuss the manifold $\cM$ and the map $\tau$ for the fixed-rank TT manifold and for balanced binary TTNs.

	\begin{example}[TT format]\label{ex:TT format}
	In the functional TT format, we consider functions $\inner{\mathsf{A}}{\Phi(\cdot)} \in \cV^{n_0}$ where the coefficient tensor $\mathsf A \in \R^{n_0 \times \cdots \times n_d}$ admits an entry-wise decomposition
	\begin{equation}
		 \mathsf{A}_{j_0, j_1, \ldots, j_d}
		= \sum_{k_1, \ldots, k_d}^{r_1, \ldots, r_d}
		A_0(j_0, k_1) A_1(k_1, j_1, k_2) A_2(k_2, j_2, k_3) \cdots A_{d-1}(k_{d-1}, j_{d-1}, k_d) A_d(k_d, j_d) \  \label{eq:TT}
	\end{equation}
	for some $A_k \in \R^{r_k \times n_k \times r_{k+1}}$, $k=0,\dots,d$, where $r_0 = r_{d+1} = 1$.\footnote{Note that in this setting the mode sizes of TT cores would commonly be enumerated as $r_{-1}, \ldots, r_{d}$. In order to avoid negative indices we adopt a shifted enumeration starting at $0$.}
	The vector $\mathbf{r} = (r_1, \ldots, r_d)$ is called the TT-rank of $\mathsf{A}$, assuming all values $r_k$ are as small as possible for such a representation to exist.
	We can then conversely consider the manifold of all tensors with a fixed TT-rank $\mathbf{r}$. 
	In this case we choose
	\begin{equation}
		\cM =  \R_\ast^{r_0 \times n_0 \times r_{1}} \times \cdots \times \R_\ast^{r_d \times n_d \times r_{d+1}} \label{eq:M-TT} \ ,
	\end{equation}
	where by  $\R^{r_k \times n_k \times r_{k+1}}_\ast$ we denote the open set of third-order tensors whose $(r_k \times n_k r_{k+1})$ and $(r_k n_k \times r_{k+1})$ unfolding matrices have full row resp. column rank (equal to $r_{k}$ resp.~$r_{k+1}$).
	The map $\tau: \cM \to \cT$, $(A_0, A_1, \ldots, A_d) \mapsto \tau(A_0,A_1, \ldots, A_d) = \mathsf{A}$ is then implicitly given by \cref{eq:TT}. We refer to, \eg,~\cite[Section~9.3]{UschmajewVandereycken20} for further details.
	Due to an inherent non-uniqueness in the representation~\eqref{eq:TT} it is in theory possible to further restrict all but one set $\R^{r_k \times n_k \times r_{k+1}}_\ast$ to certain Stiefel manifolds.
	However, we limit the discussion of non-uniqueness to the next example of balanced binary TTNs since we do not use TT in our numerical experiments.
	We also remark that the core with the output mode of size $n_0$ could in theory be put in any position in the TT.
	We move it to the first position purely because of notational simplicity, not because it is in any way canonical.
	\cref{fig:ttn} (left) depicts a functional tensor train in tensor diagram notation.
	The cores in the top row are the parameters $A_0, A_1, \ldots, A_d$.
	The tensors in the bottom row are the (evaluated) features vector $\Phi_\nu(x_\nu)$.
	\end{example}

	\begin{figure}[t]
		\centering
		\hspace{-6mm}\begin{minipage}{0.45\linewidth}
			\centering
			\begin{tikzpicture}[>=latex, node distance=15mm]
				\tikzset{ every node/.append style={font=\scriptsize} }
				
				\tikzstyle{tensor}=[circle, fill=black, minimum size=2mm, inner sep=0pt]
				
				\node[tensor] (G1) {};
				\node[tensor] (G2) [right of=G1] {};
				\node[tensor] (G3) [right of=G2] {};
				\node[tensor] (G4) [right of=G3, xshift=5mm] {};
				\node at ($(G3)!0.5!(G4)$) {$\cdots$};
				\node[tensor] (G5) [below of=G2, yshift=5mm] {};
				\node[tensor] (G6) [below of=G3, yshift=5mm] {};
				\node[tensor] (G7) [below of=G4, yshift=5mm] {};
				\node at (G5.south) [right=-2pt, below=1pt] {$\Phi^1(x_1)$};
				\node at (G6.south) [right=-2pt, below=1pt] {$\Phi^2(x_2)$};
				\node at (G7.south) [right=-2pt, below=1pt] {$\Phi^d(x_d)$};
				
				\draw (G1) -- (G2);
				\draw (G2) -- (G3);
				\draw (G2) -- (G5) node[above=5pt, xshift=-6pt] {$n_1$};
				\draw (G3) -- (G6) node[above=5pt, xshift=-6pt] {$n_2$};
				\draw (G4) -- (G7) node[above=5pt, xshift=-6pt] {$n_d$};
				\draw (G3) -- ++(0.5, 0);
				\draw (G4) -- ++(-0.5, 0);

				\node[above] at ($(G1)!0.5!(G2)$) {$r_1$};
				\node[above] at ($(G2)!0.5!(G3)$) {$r_2$};
				
				\draw (G1) -- ++(0, -10mm) node[above=5pt, xshift=-6pt] {$n_0$};
				
				\node at (G1.north) [above = -3pt] {$A_0$};
				\node at (G2.north) [above = -3pt] {$A_1$};
				\node at (G3.north) [above = -3pt] {$A_2$};
				\node at (G4.north) [above = -3pt] {$A_d$};
				
			\end{tikzpicture}
		\end{minipage}
		\hspace{8mm}
		\begin{minipage}{0.45\linewidth}
			\centering
			\begin{tikzpicture}[>=latex, node distance=14mm]
				\tikzset{ every node/.append style={font=\scriptsize} }
				\tikzstyle{tensor}=[circle, fill=black, minimum size=2mm, inner sep=0pt]
				
				\node[tensor] (A1) at (0,0) {};
				\node[tensor] (A2) [right of=A1, xshift=-3mm] {};
				\node[tensor] (A3) [right of=A2, xshift=-4mm] {};
				\node[tensor] (A4) [right of=A3, xshift=-3mm] {};
				
				\node (A5) [right of=A4, yshift=3mm, xshift=-7mm] {$\cdots$};
				
				\node[tensor] (A6) [right of=A4, xshift=2mm] {};
				\node[tensor] (A7) [right of=A6, xshift=-4mm] {};
				
				\node[tensor] (B1) at ($(A1)!0.5!(A2)+(0,0.8)$) {};
				\node[tensor] (B2) at ($(A3)!0.5!(A4)+(0,0.8)$) {};
				\node[tensor] (B3) at ($(A6)!0.5!(A7)+(0,0.8)$) {};
				
				\node[tensor] (F1) at ($(B1)!0.5!(B2)+(0,0.8)$) {};
				
				\node[tensor] (C1) at ($(A1)!0.5!(A7)+(0,3)$) {};

				\draw (A1) -- (B1) node[midway, left] {$n_1$};
				\draw (A2) -- (B1) node[midway, right] {$n_2$};
				\draw (A3) -- (B2) node[midway, left] {$n_3$};
				\draw (A4) -- (B2) node[midway, right] {$n_4$};
				
				\draw (A6) -- (B3) node[midway, left] {$n_{d-1}$};
				\draw (A7) -- (B3) node[midway, right] {$n_{d}$};
				
				\draw (B1) -- (F1) node[midway, left] {$r_{1, 2}$};
				\draw (B2) -- (F1) node[midway, right] {$r_{3, 4}$};
				\node at ($(F1)!0.5!(C1)+(-0.1,0)$) {\rotatebox{89.5}{$\ddots$}};
				\node at ($(A4)!0.5!(A6)+(-0.1, 2.2)$) {\rotatebox{-0.5}{$\ddots$}};

				\draw (C1) -- ++(0, 0.5) node[above] {$n_0$};
				
				\node at (A1.south) [xshift=-2pt, below=-1pt] {$\Phi^1(x_1)$};
				\node at (A2.south) [xshift=-2pt, below=-1pt] {$\Phi^2(x_2)$};
				\node at (A3.south) [xshift=-2pt, below=-1pt] {$\Phi^{3}(x_3)$};	
				\node at (A4.south) [xshift=-2pt, below=-1pt] {$\Phi^4(x_4)$};	
				\node at (A6.south) [xshift=-2pt, below=-1pt] {$\Phi^{d-1}(x_{d-1})$};	
				\node at (A7.south) [xshift=8pt, below=-1pt] {$\Phi^d(x_d)$};	
				
				\node at (B1.north) [left] {$A_{1, 2}$};
				\node at (B2.north) [right] {$A_{3, 4}$};
				\node at (B3.north) [right] {$A_{d-1, d}$};
				\node at (F1.north) [left] {$A_{1, 2, 3, 4}$};
				\node at (C1.north) [left] {$A_{1, \ldots, d}$};
				
			\end{tikzpicture}
		\end{minipage}
		
		\caption{Functional tensor train (left) and functional (binary) tree tensor network (right).}
		\label{fig:ttn}
	\end{figure}
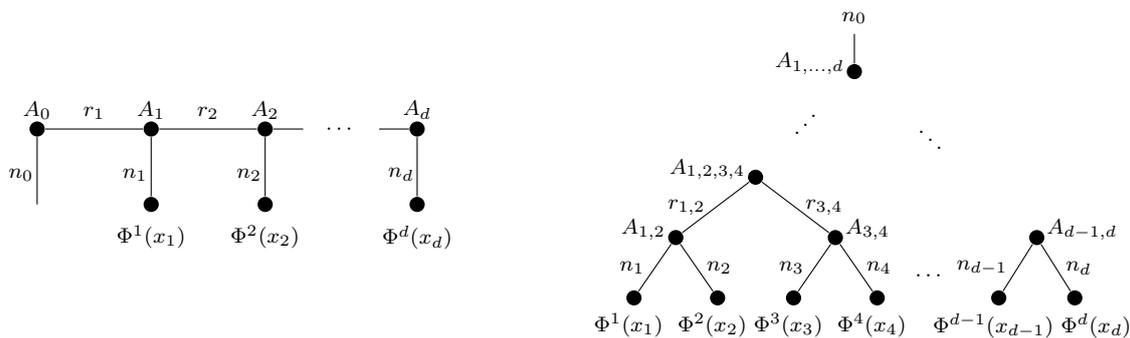
	
	\begin{example}[Balanced binary TTN]\label{ex: balanced binary TTN}
	Functional tree tensor networks (TTNs) are a generalization of functional TTs.
	We provide an example of a binary balanced functional TTN in \cref{fig:ttn} (right).
	The generalization to non-balanced binary trees is straightforward.
	For a formal definition of the depicted TTN, let $d = 2^k$.
	We label all the nodes of the balanced tree with $t \in 2^{\{1, \ldots, d\}}$, $t = \{(i-1) 2^j + 1, \ldots, i 2^j\}$, $i = 1, \ldots, 2^{k-j}$, $j = 1, \ldots, k$.
	These index sets $t$ form nested partitions of $\{1, \ldots, d\}$. Specifically, $t_L$ and $t_R$ are the left and right children of $t$ iff $t = t_L \cup t_R$ and $t_L < t_R$ elementwise.
	To each node $t$ that is not a leaf and has left and right children $t_L$ and $t_R$, respectively, we then attach third-order tensors $A_t \in \R^{r_{t_L} \times r_{t_R} \times r_t}$, called the \emph{cores} of the TTN.
	This requires to fix integers $r_t$ for every node $t$ that determine the sizes of cores.
	For leaves $t=\{\nu\}$ we enforce $r_t = n_{\nu}$, whereas for the root $t = \{1,\dots,d\}$ we should take $r_t = n_0$.
	The TTN then implicitly associates another set of matrices $B_t$ to all nodes $t$ that are not leaves according to the following recursive construction:
	\begin{align} \label{eq:ttn_recursion}
		B_{t} =
		\begin{cases}
			\bar A_{t}, &\text{if $\abs{t} = 2$ (i.e.~$t= \{\nu, \nu+1\}$)}, \\
			(B_{t_L} \otimes B_{t_R}) \bar A_t, &\text{if $\abs{t} \ge 4$ (i.e.~$t = t_L \cup t_R$)} \, .
		\end{cases}
	\end{align}
	Here $\bar A_t$ is a reshape of the core $A_t$ into an $(r_{t_L}r_{t_R}) \times r_t$ matrix.
	As a result, each matrix $B_t$ is of size $n_t \times r_t$, where $n_t$ is the product of dimensions $n_\nu$ over all $\nu \in t$, e.g.~$n_{\{1,2\}} = n_1n_2$, $n_{\{1,2,3,4\}} = n_1n_2n_3n_4$ etc.
	In particular, the matrix $B_{\{1,\dots,d\}} \in \R^{(n_1\cdots n_d) \times n_0}$ at the root encodes a tensor $\mathsf{A} \in \R^{n_0 \times n_1 \times \dots \times n_d}$.
	The recursive relation~\eqref{eq:ttn_recursion} induces a multilinear map
	\begin{equation}
		\mathsf{A} = \tau( (A_t) ) \ .
	\end{equation}
	acting on the set $\bigtimes_{\abs{t} \ge 2} \R^{(r_{t_L} r_{t_R}) \times r_t}$ of tuples $(A_t)$ of all cores of specified size.
	The image of $\tau$ is the set $\cT$ of all tensors $\mathsf{A}$ representable in the described recursive way for the fixed choice of bond dimensions $(r_t)$.
	The corresponding functional TTN model $\cH$ is obtained via contractions $\inner{\mathsf{A}}{\Phi(x)}$ with rank-one tensors $\Phi(x)$.
	The key point is that performing these contractions is practically feasible using recursion, if the inner sizes $r_t$, also called \emph{bond dimensions} of the TTN, are moderate, since all computations are performed using only the cores $A_t$.
	The matrices $B_t$ are never explicitly formed.
	Note that the described format slightly varies from the original HT format from~\cite{HackbuschKuehn09} in that no separate cores (bases) were attached to the leaves.
	However, it corresponds to the balanced HT format for (the slices) of a $n_0 \times (n_1n_2) \times \dots \times (n_{d-1}n_d)$ reshape of $\mathsf{A}$.
	Note that in our example the output mode of dimension $n_0$ is attached to the root of the tree.
	It could, in theory also be attached to any other core, however, the root seems to be the most canonical choice.
	Now, in order to obtain a smooth manifold, as formally required for the purpose of this work, we would need to further restrict the cores tot satisfy $\rank(\bar A_t) = r_t$ for all nodes except for the root.
	This is possible if and only if $r_t \le r_{t_L} r_{t_R}$ for all $t$ with $2 \le \abs{t} < d$ and implies that all matrices $B_t$ have full column rank $r_t$.
	Without going into detail we note that then all reshaped cores $\bar A_t$ except at the root can be further restricted to Stiefel manifolds without changing the image of $\tau$.
	We refer to~\cite{UschmajewVandereycken13} or~\cite[Section~3.8]{Bachmayr16} for details.
	In summary, for the reshaped cores~$\bar A_t$ the parameter space $\cM$ could be taken as a Cartesian product of Stiefel manifolds except for the root:
	\begin{equation}
		\cM = \R^{(r_{\{1,\dots,d/2\}} r_{\{d/2 + 1,\dots,d\}}) \times n_0} \times \left(\bigtimes_{2 \le \abs{t} < d } \operatorname{St}(r_{t_L}r_{t_R}, r_t) \right) \label{eq:M-TTN}
	\end{equation}
	While this restriction to Stiefel manifolds still does not eliminate all non-uniqueness of the parametrization, it already improves the numerical stability of the TTN representation.
	For addressing the remaining non-uniqueness a quotient formalism can be employed, as discussed in the following remark.
	\end{example}
	
	\begin{remark}[Quotient formalism] \label{rem: quotient formalism}
	The explicit multilinear parametrizations $\tau : \cM \to \cT$ of low-rank tensor manifolds as in~\eqref{eq:map tau} are convenient and surjective but typically not injective.
	Correspondingly, the parametrization $F = G \circ \tau$ of the functional low-rank model $\cH$ will not be injective, where $G$ is the map~\eqref{eq:map G}.
	For example, the representation of balanced binary TTNs vie Stiefel manifolds as in \cref{eq:M-TTN} still includes orthogonal invariances between the inner nodes of the tree.
	As discussed in \cref{sec: natural Riemannian gradient} the natural Riemannian gradient will then generally not be unique.
	This motivates employing a quotient manifold formalism on $\cM$ for uniquely representing tangent vectors of $\cT$.
	For a detailed description of the quotient formalism for functional TTNs, we refer to~\cite{SilvaHerrmann15} and \cite{willner25}.
	Here we only state what is relevant to our context.
	Concretely, we employ a Cartesian horizontal space denoted by $H_\Theta^\equiv \cM$,
	and the orthogonal projector onto the Cartesian horizontal space, $P_\Theta^\equiv:  T_\Theta\cM \to H_\Theta^\equiv \cM$.
	The restriction of $D\tau$ to $H_\Theta^\equiv \cM$ then is bijective.
	This means that the restriction of $F$ to $H_\Theta^\equiv \cM$ is a local isomorphism and thus there is a unique solution to~\cref{eq:ngrad} in $H_\Theta^\equiv \cM$.
	When restricting to $H_\Theta^\equiv \cM$, the system for the natural Riemannian gradient can also be written as
	\begin{equation}
		P_\Theta^\equiv DF(\Theta)^\ast DF(\Theta) P_\Theta^\equiv [\zeta] = P_\Theta^\equiv \rgrad (\cR \circ F)(\Theta) \label{eq:p-equiv-ngrad-system}
	\end{equation}
	and needs to be solved for $\zeta \in H_\Theta^\equiv \cM$. By applying $DF(\Theta)$, it can then be easily verified that the solution to this system is a natural Riemannian gradient:
	\begin{align}
		DF(\Theta) [\zeta]
		&=  DF(\Theta) [P_\Theta^\equiv \zeta]\\
		&= DF(\Theta) [P_\Theta^\equiv ((DF(\Theta)P_\Theta^\equiv)^\ast (DF(\Theta)P_\Theta^\equiv))^+ (DF(\Theta)P_\Theta^\equiv)^\ast [\rgrad \cR(F(\Theta))]]\\
		&= (DF(\Theta) P_\Theta^\equiv) (DF(\Theta)P_\Theta^\equiv)^+ [\rgrad \cR(F(\Theta))] = \rgrad \cR(F(\Theta)).
	\end{align}
	Notably, formula~\eqref{eq:p-equiv-ngrad-system} coincides with \cite[Equation 3.2]{hu24}. 
	Since their framework also extends to optimization on quotient manifolds,
	many of their information-geometric considerations and findings apply.
	\end{remark}

	We conclude this subsection with a discussion on the choice of bases for the spaces $V_\nu$.
	The functional tensor model assumed a fixed vector of basis functions $\Phi_\nu = (\varphi_1^\nu,\dots,\varphi_{n_\nu}^\nu)^\T$ for each $V_\nu$, which enter through the feature map $\Phi$ in the map $G$ in~\eqref{eq:map G} and hence in the overall parametrization $F = G \circ \tau$.
	Let us indicate this by writing $F_\Phi$ instead of $F$.
	While the space $\cV^{n_0}$ containing all functions of the form $\inner{\mathsf{A}}{\Phi(\cdot)}$ obviously does not depend on the particular choice of basis, one may ask how it influences the restriction to a low-rank TTN manifold.
	Assume the bases are changed according to
	\[
	 \Psi^\nu = M_\nu^{-1} \Phi_\nu
	\]
	for $\nu = 1,\dots,d$ where $M_\nu \in \R^{n_\nu \times n_\nu}$ are invertible.
	Let $\Psi(\cdot) = \Psi^1(\cdot_1) \otimes \dots \otimes \Psi^d(\cdot_d)$ be the corresponding feature map.
	Then for any $x \in \Omega_1 \times \dots \times \Omega_d$ we have
	\[
	 \Phi(x) = \Phi_1(x_1) \otimes \dots \otimes \Phi_d(x_d) = (M_1 \Psi^1(x_1) \otimes \dots \otimes (M_d \Psi^d(x_d)) = (M_1 \otimes \dots \otimes M_d) \Psi(x)
	\]
	where $M_1 \otimes \dots \otimes M_d$ denotes the tensor product of linear maps.
	Correspondingly, for a functional TTN $F_{\Phi}(\Theta)$ we have
	\[
	 F_\Phi(\Theta) = \inner{\tau(\Theta)}{\Phi(\cdot)} = \inner{(I_{n_0} \otimes M_1^\T \otimes \dots \otimes M_d^\T) \tau(\Theta)}{\Psi(\cdot)}.
	\]
	The considered TTN manifolds $\cT \in \R^{n_0 \times n_1 \times \dots \dots \times n_d}$ are invariant under tensor product of invertible linear maps, so $\tau(\Theta) \in \cT$ implies $(I_{n_0} \otimes M_1^\T \otimes \dots \otimes M_d^\T) \tau(\Theta) \in \cT$.
	Therefore
	\[
	 (I_{n_0} \otimes M_1^\T \otimes \dots \otimes  M_d^\T) \tau(\Theta)  = \tau(\tilde \Theta)
	\]
	for some $\tilde \Theta \in \cM$. 
	Finding $\tilde \Theta$ only requires applying the change of basis to the cores connected to the leaves of the TTN. 
	For example, for the TT format from \cref{ex:TT format} one easily verifies
	\[
	 (I_{n_0} \otimes M_1^\T \otimes \dots \otimes M_d^\T) \tau(A_0,A_1,\dots,A_d) = \tau(\tilde A_0,\tilde A_1,\dots, \tilde A_d)
	\]
	with $\tilde A_0 = A_0$ and $\tilde A_\nu(k_\nu,\cdot,k_{\nu+1}) = M_\nu A_\nu(k_\nu,\cdot,k_{\nu+1})$ for $\nu=1,\dots,d$. 
	As a result we obtain
	\[
		F_{\Phi}(\Theta) = F_{\Psi}(\tilde \Theta).
	\]
	In conclusion, the change of the tensor product basis can be simply interpreted as a (linear!) reparametrization of the same manifold similar as discussed in \cref{rem:reparametrization}.
	By construction, the corresponding natural Riemannian gradient will automatically adjust and revert the effect, leading to the same descent direction on $\cH$ (at corresponding parameters).
	For practical purposes, however, the choice of basis may still be relevant as can be observed in experiments; see \cref{sec:Least-squares recovery problem}.
	It was already mentioned that reparametrizations lead to equivalent natural Riemannian gradients only in first order, so methods will deviate over many iterations.
	Another aspect is that the conditioning of $DF^* DF$ and its empirical counterpart can be affected by the choice of basis.
	This is also indicated by the discussion in~\cref{remark: meaure product structure} on orthonormal bases.
	
	\subsection{Least-squares regression with functional tree tensor networks}
	\label{sec:least_squares_ftn}
	We now discuss how to employ the functional low-rank model for least-squares regression, which was also considered in~\cite{eigel25}.
	We set $\cY = \R^{n_0}$ and consider hypotheses $f \in L_2(\cX, \R^{n_0}; \QX)$ together with the least-squares loss
	\begin{equation}
		\ell(f, x, y) \coloneqq \norm{f(x) - y}_\cY^2 \ ,
	\end{equation}
	where $\norm{\cdot}_\cY$ is any norm induced by an inner product on $\cY$. 	As discussed in the introduction, we assume that for each $x \in \cX$ there is a unique $y = y(x)$ such that $\E_{(x, y) \sim \QXY}[\ell(f, x ,y)] = \E_{x \sim \QX}[\ell(f, x , y(x))]$.
	The risk then becomes
	\begin{equation}
		\cR(f) = \int_\cX \norm{f(x) - y(x)}_\cY^2 \QX(dx) = \| f - y \|^2_{L_2(\cX, \cY; \QX)}\ . \label{eq:risk least squares}
	\end{equation}

	As a learning model we choose a functional low-rank manifold $\cH \subseteq L_2(\cX, \R^{n_0}; \QX)$.
	Concretely, we choose $\mathcal H = \Im G$ as in the previous section and consider the parametrization via $F : \cM \to \cH$, $F(\Theta) = (G \circ \tau)(\Theta) = \inner{\tau(\Theta)}{\Phi(\cdot)}$ as in~\eqref{eq: FTN parametrization}.
	Note that assuming $\cH \subseteq L_2(\cX, \R^{n_0}; \QX)$ in this context may require some additional conditions, but does not seem critical.
	For example if the marginal measure $\QX$ has an integrable density on $\cX$, assuming all basis functions $\varphi_i^\nu$ to be continuous and square integrable is sufficient.
	In particular, $\cX = \Omega_1 \times \dots \times \Omega_d \subseteq \R^d$.

	We will equip $\cH$ with the ``trivial'' Riemannian metric
	\begin{equation}
		\inner{\zeta}{\xi}_f \coloneqq \int_\cX \inner{\zeta(x)}{\xi(x)}_\cY \QX(dx) \ , \label{eq:ls-metric}
	\end{equation}
	where $\zeta, \xi \in T_f \cH$.
	In particular, the metric is the same at any point.
	Of course, there are other choices for the Riemannian metric on $\cH$, but we do not consider this case for the least-squares setting.
	Note that for the specific choice~\eqref{eq:ls-metric}, the natural Riemannian gradient descent method matches the Gauss-Newton method for minimizing the risk~\eqref{eq:risk least squares} in parameterized form $f = F(\Theta)$.

	Let us now consider how to compute $DF(\Theta)^\ast DF(\Theta)$ by first investigating $DG(\mathsf{A})^\ast DG(\mathsf{A})$ for a tensor $\mathsf{A}$. Thanks to our choice of the metric~\eqref{eq:ls-metric},~\cref{prop:dfdf-as-integral} is applicable.
	\begin{proposition}
		It holds that 
		\begin{equation}
			DG(\mathsf{A})^\ast DG(\mathsf{A}) =  I_{n_0} \otimes \int_\cX \Phi_1(x_1) \Phi_1(x_1)^\T \otimes \ldots \otimes \Phi_d(x_d) \Phi_d(x_d)^\T \QX(dx) \ .
		\end{equation}
		where $\otimes$ denotes the tensor product of linear operators.
	\end{proposition}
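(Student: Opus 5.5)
Since $G(\mathsf{A}) = \inner{\mathsf{A}}{\Phi(\cdot)}$ is the restriction of a linear map on $\R^{n_0 \times n_1 \times \dots \times n_d}$, its differential is the map itself: $DG(\mathsf{A})[\mathsf{B}] = \inner{\mathsf{B}}{\Phi(\cdot)}$ for every direction $\mathsf{B}$ (first in the full tensor space, then by restriction in $T_\mathsf{A}\cT$). Our plan is to apply \cref{prop:dfdf-as-integral} with $G$ in place of $F$ and the metric~\eqref{eq:ls-metric}, taking the Euclidean (Frobenius) inner product on the tensor space. This gives
\[
DG(\mathsf{A})^\ast DG(\mathsf{A}) = \int_\cX DG_x(\mathsf{A})^\ast DG_x(\mathsf{A}) \, \QX(dx) \ ,
\]
so it remains to compute the pointwise operator $DG_x(\mathsf{A})^\ast DG_x(\mathsf{A})$.

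For fixed $x$, the map $DG_x(\mathsf{A}) : \mathsf{B} \mapsto \inner{\mathsf{B}}{\Phi(x)}_{1,\dots,d} \in \R^{n_0}$ is, after vectorization with the output mode kept separate, the linear map $I_{n_0} \otimes \vec(\Phi(x))^\T$. Its adjoint sends $v \in \R^{n_0}$ to $v \otimes \Phi(x)$. To see this, check that $\inner{\inner{\mathsf{B}}{\Phi(x)}}{v} = \inner{\mathsf{B}}{v \otimes \Phi(x)}$ entrywise, here with $\inner{\cdot}{\cdot}_\cY$ taken Euclidean. Composing the two maps gives
\[
DG_x^\ast DG_x[\mathsf{B}] = \inner{\mathsf{B}}{\Phi(x)}\otimes\Phi(x) = (I_{n_0} \otimes \Phi(x)\Phi(x)^\T)\mathsf{B} \ .
\]
The rank-one structure~\eqref{eq:Phi-definition} then yields
\[
\Phi(x)\Phi(x)^\T = \Phi_1(x_1)\Phi_1(x_1)^\T \otimes \dots \otimes \Phi_d(x_d)\Phi_d(x_d)^\T \ ,
\]
using the mixed-product rule for Kronecker products. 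Integrating and pulling the constant factor $I_{n_0}$ out by linearity of the integral gives the claimed formula.

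The main obstacle is bookkeeping rather than substance. First, the tangent space $T_\mathsf{A}\cT$ is a proper subspace, so the adjoint of the restricted map picks up an orthogonal projection $P_{T_\mathsf{A}\cT}$. We would handle this by stating the identity for $G$ extended to the whole linear tensor space, where $\cT$ is replaced by its ambient space, which is evidently the intended meaning; for the restricted operator one obtains the compression $P\,(\cdot)\,P$. Second, the result as written presumes the Euclidean inner product on $\cY$. For a general inner product $\inner{u}{v}_\cY = u^\T W v$, the factor $I_{n_0}$ would become $W$, and we would note this caveat. Finally, the interchange of integral and operator is justified by the assumption that all basis functions are square integrable, which is exactly what \cref{prop:dfdf-as-integral} already requires.
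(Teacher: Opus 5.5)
Your proof is correct and follows the paper's route: apply \cref{prop:dfdf-as-integral} to $G$, use linearity to get $DG_x(\mathsf{A}) = \inner{\cdot}{\Phi(x)}$, identify the pointwise operator as $I_{n_0} \otimes \Phi_1(x_1)\Phi_1(x_1)^\T \otimes \dots \otimes \Phi_d(x_d)\Phi_d(x_d)^\T$, and integrate. The only difference is in that pointwise step: the paper pairs against rank-one tensors $b_0 \otimes \dots \otimes b_d$ and extends by linearity, whereas you write down the adjoint $v \mapsto v \otimes \Phi(x)$ and use the mixed-product rule; your two caveats (working on the full ambient tensor space rather than $T_{\mathsf{A}}\cT$, and assuming the Euclidean inner product on $\cY$) are also tacitly made in the paper's proof.
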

	\begin{proof}
		By~\cref{prop:dfdf-as-integral}, we have
		\begin{equation}
			DG(\mathsf{A})^\ast DG(\mathsf{A}) = \int_\cX DG_x(\mathsf{A})^\ast DG_x(\mathsf{A})\QX(dx) \ .\label{eq: integration DGx}
		\end{equation}
		Since $G_x(\mathsf{A}) = \inner{\mathsf{A}}{\Phi(x)}$ is linear in $\mathsf{A}$, we have $D G_x(\mathsf{A}) = \inner{\cdot}{\Phi(x)}$.
		For rank-one tensors $\mathsf{B} = b_0 \otimes b_1 \otimes \dots \otimes b_d$ and $\mathsf{C} = c_0 \otimes c_1 \otimes \dots \otimes c_d$ one directly verifies
		\begin{align}
		 \inner{\mathsf{C}}{D G_x(\mathsf{A})^\ast D G_x(\mathsf{A}) (\mathsf{B})}_{0,1, \ldots, d}
		 &= \inner{D G_x(\mathsf{A})(\mathsf{C})}{D G_x(\mathsf{A}) (\mathsf{B})}_0\\
 		 &= \inner{ b_0 \cdot [b_1^\T \Phi_1(x_1) \cdots b_d^\top \Phi_d(x_d)] }{ c_0 \cdot [c_1^\T \Phi_1(x_1) \cdots c_d^\top \Phi_d(x_d)] }_0\\
 		 &= b_0^\T c_0 \cdot  b_1^\T \Phi_1(x_1) \Phi_1(x_1)^\T c_1  \cdots b_d^\T \Phi_d(x_d) \Phi_d(x_d)^\T c_d \\
 		 &= \inner{\mathsf{B}}{[I_{n_0} \otimes \Phi_1(x_1) \Phi_1(x_1)^\T \otimes \ldots \otimes \Phi_d(x_d) \Phi_d(x_d)^\T]\mathsf{C}}_{0,1,\dots,d}.
		\end{align}
		The formula then extends to all tensors $\mathsf{B}, \mathsf{C} \in \R^{n_0 \times n_1 \times \dots \times n_d}$ and the result follows by~\eqref{eq: integration DGx}.
	\end{proof}

	From the lemma we obtain
	\begin{align}
		DF(\Theta)^\ast DF(\Theta) 
		&= D\tau(\Theta)^\ast  \left(I_{n_0} \otimes \int_\cX \Phi_1(x_1) \Phi_1(x_1)^\T \otimes \ldots \otimes \Phi_d(x_d) \Phi_d(x_d)^\T \QX(dx) \right) D\tau(\Theta) \\
		&= \int_\cX D\tau(\Theta)^\ast  \left(I_{n_0} \otimes \Phi_1(x_1) \Phi_1(x_1)^\T \otimes \ldots \otimes \Phi_d(x_d) \Phi_d(x_d)^\T \right) D\tau(\Theta) \QX(dx) \ .
	\end{align}
	The integral cannot be computed exactly and is approximated empirically according to \cref{eq:empirical-dfdf}, which results in
	\begin{equation} \label{eq: empirical system l2}
		DF(\Theta)^\ast DF(\Theta) 
		\approx \frac{1}{m} \sum_{i = 1}^m  D\tau(\Theta)^\ast \left(I_{n_0} \otimes \Phi_1(x^i_1) \Phi_1(x^i_1)^\T \otimes \cdots \otimes \Phi_d(x^i_d) \Phi_d(x^i_d)^\T \right) D\tau(\Theta) \ .
	\end{equation}
	The empirical linear system for the natural Riemannian gradient given by~\eqref{eq:empirical linear equation} then becomes
	\begin{multline}
		\frac{1}{m} \sum_{i = 1}^m  D\tau(\Theta)^\ast \left(I_{n_0} \otimes \Phi_1(x^i_1) \Phi_1(x^i_1)^\T \otimes \cdots \otimes \Phi_d(x^i_d) \Phi_d(x^i_d)^\T \right) D\tau(\Theta) [\zeta]
		\\ {}= \frac{1}{m} \sum_{i=1}^m \rgrad \ell(F(\cdot), x^i, y^i)(\Theta) \ .
	\end{multline}

	\begin{remark}[$\QX$ with product structure]\label{remark: meaure product structure}
		Computing the Riemannian gradient becomes considerably easier if the probability measure $\QX$ is a product measure, that is, if $\QX$ factorizes into
		\begin{equation}
			\QX(X) = Q_1(X_1) Q_2(X_2)  \cdots Q_d(X_d)
		\end{equation}
		for $X = X_1 \times X_2 \times \cdots \times X_d$.
		This can be the case in applications where it is possible to choose the type of sampling, such as problems involving PDEs.
		In this case it is possible to choose a tensor product basis $\{ \psi_{i_1}^1 \otimes \cdots \otimes \psi_{i_d}^\nu\}$ of $\cV$ that is orthonormal in $L_2(\cX;\QX)$ by choosing each basis $\{\psi_{1}^\nu \otimes \cdots \otimes \psi_{n_\nu}^\nu \}$ orthonormal in $L_2(\Omega_\nu;Q_\nu)$.
		Define $\Psi: \R^d \to \R^{n_1 \times \cdots \times n_d}$ similar to \cref{eq:Phi-definition} and set 
		\begin{equation}
			\tilde{G}(\Theta) = \inner{\Theta}{\Psi(\cdot)} \ .
		\end{equation}
		It is easy to verify that $\Im \tilde{G} = \Im G$, as already discussed in \cref{sec:low-rank-model}.
		By construction, $\tilde{G}$ is an isometry.
		Hence $\ngrad (R \circ \tilde{G}) = \rgrad (R \circ \tilde{G})$ and for $\tilde{F} \coloneqq \tilde{G} \circ \tau$ we have
		\begin{equation}
			D\tilde{F}(\Theta)^\ast D\tilde{F}(\Theta) 
			= D \tau(\Theta)^\ast \underbrace{D \tilde{G}(\tau(\Theta))^\ast D \tilde{G}(\tau(\Theta))}_{= I} D\tau(\Theta)
			= D \tau(\Theta)^\ast D\tau(\Theta) \ .
		\end{equation}
		The operator $D \tau(\Theta)^\ast D\tau(\Theta)$ can even be evaluated exactly and does not have to be approximated empirically.
		This is exactly what \citet{SilvaHerrmann15} employ for their approximate Gauss-Newton method.
		Using $D \tau(\Theta)^\ast D\tau(\Theta)$ as a substitute for $D F(\Theta)^\ast DF(\Theta)$ when arbitrary measures and bases are involved of course completely discards functional considerations and can then really be seen as undoing the effect of the reparametrization~$\tau$.
		Indeed, applying $D \tau(\Theta) (D \tau(\Theta)^\ast D\tau(\Theta))^{+}$ to $\rgrad(R \circ F)$ recovers $\rgrad (R \circ G)$, independently of the basis used.
	\end{remark}

	\subsection{Multinomial logistic regression with functional tree tensor networks}
	\label{sec:logistic-regression}

	For classification tasks, we consider multinomial logistic regression, also called softmax regression.
	Here, the task is to classify inputs $x \in \cX$ into $n_0$ classes.
	Let $S_{n_0} \coloneqq \{p \in \R^{n_0} \mid \sum_j p_j = 1, p_j > 0\}$ denote the open (probability) simplex.
	Recall that in this setting, for hypotheses $f: \cX \to S_{n_0}$, the risk is generally given by
	\begin{equation}
		\cR(f) = - \int_\cX \sum_{j = 1}^{n_0} y(x)_j \log(f(x)_j) \QX(dx) \ , \label{eq:mlr-risk}
	\end{equation}
	where $y(x) \in \{0, 1\}^{n_0}$ are the one-hot encoded, noise-free labels with $y(x)_j = 1$ iff $x$ belongs to class $j$.
	In the multinomial logistic regression setting, the hypotheses are functions $f: \cX \to S_{n_0}$.
	They can be interpreted as conditional densities of a categorical distribution in the sense that the probability of a point $x \in \cX$ to belong to the class $j$ is given by ${p(y = j \mid x) = f(x)_j}$.
	
	The set $S_{n_0}$ can be turned into a Riemannian manifold of discrete probability mass functions.
	The tangent spaces given by 
	\begin{equation}
		T_p S_{n_0} = \{q \in \R^{n_0} \mid \sum_j q_j = 0\}
	\end{equation}
	will be equipped with the so called Fisher-Rao metric, given by
	\begin{equation}
		\inner{\zeta}{\xi}_p^\mathrm{FR}
		\coloneqq \sum_{j=1}^{n_0} \frac{\zeta_j \xi_j}{p_j}
		= \zeta^\T
		\underbrace{
			\diag(\frac{1}{p_1}, \ldots, \frac{1}{p_{n_0}})
		}_{\eqqcolon \cF(p)}
		\xi \ .
	\end{equation}
	Here, $\cF(p)$ is the well-known Fisher Information Matrix \cite{rao45, amari97} of a categorical distribution with parameters $p$ which is often defined as
	\begin{align}
		\cF(p)
		= \E_{y \sim p}\left[\frac{\partial \log p(y)}{\partial p} \left(\frac{\partial \log p(y)}{\partial p}\right)^\T\right] \ .
	\end{align}
	Of course, there are a priori many possible choices for the metric.
	The motivation for the Fisher-Rao metric originates in information geometry and statistics.
	This metric is (up to rescaling) the unique metric invariant under sufficient statistics.
	This fact is also known as Chentsov's theorem, see \cite[Theorem 2.6]{amari00}.

	The \emph{softmax} function $\sigma: \R^{n_0} \to S_{n_0}$ is given component-wise by
	\begin{equation}
		\sigma(x)_j
		= \frac{\exp(x_j)}{\sum_{k = 1}^{n_0} \exp(x_k)} \ .
	\end{equation}
	We choose our learning model as
	\begin{equation}
		\cS \coloneqq \sigma(H) = \{ \sigma \circ f \mid f \in \cH\} \ , \label{eq:S-definition}
	\end{equation}
	where $\cH$ is a manifold of low-rank functional TTNs mapping to $\R^{n_0}$ as described in \cref{sec:low-rank-model}, together with the parametrization $F: \cM \to \cH$, $F = G \circ \tau$. In other words, the learning model consists of functional TTNs composed with softmax. 
	
	The manifold $\cS$ is obviously parameterized by $\bar{F}: \cM \to \cS$ with
	\begin{equation}
		\bar{F}(\Theta)
		= (\sigma \circ G \circ \tau)(\Theta) 
		= \sigma(\inner{\tau(\Theta_1, \ldots, \Theta_d)}{\Phi(\cdot)}) \ .
	\end{equation}
	We hence need to compute the natural Riemannian gradient \wrt $\bar{F}$.
	Choosing the Fisher-Rao metric on $S_{n_0}$ results in the following metric for the manifold $\cS$
	\begin{equation}
		\inner{\zeta}{\xi}_s
		= \int_\cX \inner{\zeta(x)}{\xi(x)}_{s(x)}^\mathrm{FR} \QX(dx)
		= \int_\cX \zeta(x)^\T \cF(s(x)) \xi(x) \QX(dx) \ , \label{eq:fr-metric-S}
	\end{equation}
	where $s \in T_s \cS$.
	Note that $s(x) = s( \cdot \mid x)$ is a probability mass function in $S_{n_0}$.
	\cref{eq:fr-metric-S} is the Fisher-Rao metric on the space of probability densities on $\cX \times \cY$ restricted to the densities $p$ whose marginal distributions \wrt $\cX$ are $p_\cX = \mu_\cX$.

	Note that similar to the previous section, we require some regularity on the functions $s \in \cS$.
	In particular, for the existence of the integral in \cref{eq:mlr-risk} we require that each component of $s: \cX \to S_{n_0}$ is in $L_1(\cX, \QX)$ .
	This assumption again does not seem critical.
	For example, assuming all basis functions $\varphi_i^\nu$ to be continuous and integrable is sufficient.
	In particular, $\cX = \Omega_1 \times \dots \times \Omega_d \subseteq \R^d$.
	
	Summarizing the above, we compute the natural Riemannian gradient \wrt $\bar{F}$ and the metric in \cref{eq:fr-metric-S}.

	The differential of $\sigma$ is easily computed as
	\begin{equation}
		D\sigma(x) 
		= \left(D\sigma(x))_{i, j}\right) 
		= \left(\sigma(x)_i \left(\delta_{i, j} - \sigma(x)_j\right)\right) \ .
	\end{equation}
	Based on this, we provide a formula for the system matrix of in \cref{eq:ngrad}.
	\begin{proposition}
		For every $\Theta \in \cT$ it holds that
		\begin{align}
			D\bar{F}(\Theta)^\ast D\bar{F}(\Theta)
			&= \int_\cX D\tau(\Theta)^\ast 
			\left(C(f_x) \otimes \Phi_1(x_1) \Phi_1(x_1)^\T \otimes \ldots \otimes \Phi_d(x_d) \Phi_d(x_d)^\T\right) D \tau(\Theta)\,  \QX(dx)\ ,
		\end{align}
		where $f_x = G(\tau(\Theta))(x) = \inner{\tau(\Theta)}{\Phi(x)}$ and
		\begin{equation}
			C(z) = D \sigma(z)^\ast \cF(\sigma(z)) D \sigma(z) \in \R^{n_0 \times n_0} \ .
		\end{equation}
	\end{proposition}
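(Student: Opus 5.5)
The plan mirrors the argument for the least-squares case: first reduce to a pointwise statement via an analogue of \cref{prop:dfdf-as-integral}, then compute the pointwise operator with the chain rule.

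\textbf{Step 1 (integral representation).} For the metric \eqref{eq:fr-metric-S}, the pointwise inner product is $\inner{\cdot}{\cdot}^{\mathrm{FR}}_{s(x)}$, and this depends on $x$. So I would rerun the proof of \cref{prop:dfdf-as-integral} with the Euclidean inner product on $\R^{n_0}$ replaced by $\zeta(x)^\T\cF(s(x))\xi(x)$. With $\bar F_x(\Theta)=\bar F(\Theta)(x)$ this gives, for $\zeta,\xi\in T_\Theta\cM$,
\[
\inner{D\bar F(\Theta)[\zeta]}{D\bar F(\Theta)[\xi]}_{\bar F(\Theta)}=\int_\cX \bigl(D\bar F_x(\Theta)[\zeta]\bigr)^\T \cF(\sigma(f_x))\, D\bar F_x(\Theta)[\xi]\,\QX(dx).
\]
Here the adjoint of $D\bar F_x(\Theta)$ is taken with respect to the Euclidean inner product on $\R^{n_0}$, with the weight $\cF$ kept explicit. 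The same linearity-of-the-integral argument as in \cref{prop:dfdf-as-integral} then yields
\[
D\bar F(\Theta)^\ast D\bar F(\Theta)=\int_\cX D\bar F_x(\Theta)^\ast\,\cF(\sigma(f_x))\,D\bar F_x(\Theta)\,\QX(dx).
\]

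\textbf{Step 2 (chain rule).} Since $\bar F_x=\sigma\circ G_x\circ\tau$ and $G_x(\mathsf A)=\inner{\mathsf A}{\Phi(x)}$ is linear, we have
\[
D\bar F_x(\Theta)=D\sigma(f_x)\,DG_x\,D\tau(\Theta).
\]
Taking adjoints, the integrand becomes
\[
D\tau(\Theta)^\ast\, DG_x^\ast\, C(f_x)\, DG_x\, D\tau(\Theta), \qquad C(z)=D\sigma(z)^\ast\cF(\sigma(z))D\sigma(z).
\]
One point needs care here. $D\sigma(z)$ maps $\R^{n_0}$ into $T_pS_{n_0}\subset\R^{n_0}$, so its adjoint must be taken consistently with the Euclidean structure on $\R^{n_0}$, with $\cF$ carrying the Fisher–Rao weight. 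The matrix $C(z)$ is then just the $n_0\times n_0$ matrix $D\sigma(z)^\T\cF(\sigma(z))D\sigma(z)$.

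\textbf{Step 3 (middle factor).} It remains to identify $DG_x^\ast\,C\,DG_x$ for a fixed symmetric $C\in\R^{n_0\times n_0}$. I would repeat the rank-one computation from the least-squares proposition with $\mathsf B=b_0\otimes\cdots\otimes b_d$ and $\mathsf C=c_0\otimes\cdots\otimes c_d$:
\[
\inner{DG_x\mathsf C}{C\,DG_x\mathsf B}=c_0^\T C b_0\prod_{\nu=1}^d c_\nu^\T\Phi_\nu(x_\nu)\Phi_\nu(x_\nu)^\T b_\nu .
\]
This equals $\inner{\mathsf C}{(C\otimes\Phi_1\Phi_1^\T\otimes\cdots\otimes\Phi_d\Phi_d^\T)\mathsf B}$, and by bilinearity it extends to all tensors. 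This replaces $I_{n_0}$ by $C(f_x)$, and substituting into Step 1 gives the claim. The statement writes $\Theta\in\cT$, which should read $\Theta\in\cM$.

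\textbf{Main obstacle.} The delicate point is Step 1: justifying the adjoint bookkeeping when the target metric varies with $x$ and $D\sigma$ maps into the tangent space of the simplex rather than all of $\R^{n_0}$. I would handle this by never forming the adjoint of $D\sigma$ abstractly. Instead I would work throughout with the bilinear form identity above and read off the operator by Riesz representation in $T_\Theta\cM$, exactly as in the proof of \cref{prop:dfdf-as-integral}.
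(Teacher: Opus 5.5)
Your proposal is correct and follows the paper's route. The paper simply calls the result an immediate consequence of \cref{prop:dfdf-as-integral}, and your Steps 1--3 spell out what that entails: rerunning that proposition with the $x$-dependent Fisher--Rao weight $\cF(\sigma(f_x))$ in place of the Euclidean pointwise inner product, applying the chain rule with $C(z)$ read as $D\sigma(z)^\T\cF(\sigma(z))D\sigma(z)$, and repeating the rank-one computation from the least-squares case with $I_{n_0}$ replaced by $C(f_x)$; your correction of $\Theta\in\cT$ to $\Theta\in\cM$ is also right.
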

	The proof is again an immediate consequence of \cref{prop:dfdf-as-integral}. 
	The empirical equivalent is given by
	\begin{equation} \label{eq: empirical system logistic}
		D\bar{F}(\Theta)^\ast D\bar{F}(\Theta)
		\approx \frac{1}{m}\sum_{i = 1}^m D\tau(\Theta)^\ast \left(C(f_{x^i}) \otimes \Phi_1(x_1^i) \Phi_1(x^i_1)^\T \otimes \ldots \otimes \Phi_d(x^i_d) \Phi_d(x^i_d)^\T\right) D \tau(\Theta)\ , \label{eq:logistic-empirical-ngrad}
	\end{equation}
	where $f_{x^i} = \inner{\tau(\Theta)}{\Phi(x^i)}$.

	Summarizing the model and parametrization for multinomial logistic regression, we have a chain of functions between manifolds
	\begin{equation}
		\cM \stackrel{\tau}{\longrightarrow} \cT 
		\stackrel{G}{\longrightarrow} \cH
		\stackrel{\sigma}{\longrightarrow} \cS
		\stackrel{R}{\longrightarrow} \R \ .
	\end{equation}
	We could in principle choose at which manifold the parametrization "ends", or equivalently, \wrt which (semi-)metric we want to compute the natural Riemannian gradient.
	By the above reasoning, in this application, the manifold $\cS$ with the Fisher-Rao metric is the canonical choice.
	With this choice, the natural Riemannian gradient for the multinomial logistic regression setting considered here in principle coincides with the natural gradient of \citet{amari98}, with the difference that we consider a \emph{Riemannian} gradient.
	
	\begin{remark}
		In the manner of interpreting the natural Riemannian gradient as the Riemannian gradient \wrt a certain metric, the Fisher-Rao metric can also be interpreted as the metric induced by a log-parametrization
		\begin{equation}
			 p \mapsto \log \circ\, p
		\end{equation}
		to the space of "log-densities".
		Choosing the metric $\inner{\zeta}{\xi}_{\log p} = \int_\cX \zeta(x) \xi(x) p(dx)$ on $T_{\log p} \log(S_{n_0})$, due to \cref{prop:dfdf-as-integral} leads to
		\begin{align}
			D \log(p)^\ast D\log(p) 
			&= \int_\cY D \log_y(p)^\ast D\log_y(p) p(dy)\\
			&= \sum_{j = 1}^{n_0} (0, \ldots, 0, \frac{1}{p_j}, 0, \ldots, 0)^\T (0, \ldots, 0, \frac{1}{p_j}, 0, \ldots, 0) p_j
			= \cF(p) \ ,
		\end{align}
		where $\log_y(p) = \log p(y)$.
		This means that the natural gradient \wrt the Fisher-Rao metric on $S_{n_0}$ points in the direction of steepest descent in the space of log-densities \wrt a $L_2$-type metric.
	\end{remark}

	\subsection{Further approximation strategies for practical computation}\label{sec:approximations}
	
	A central challenge for applying natural Riemannian gradient descent to tensor networks is that the operator $DF(\Theta)^\ast DF(\Theta): T_{\Theta}\cM \to T_{\Theta}\cM$ in the main linear system~\eqref{eq:ngrad} is in general extremely large and can neither be computed nor stored efficiently.
	Consider a balanced binary TTN (\cref{ex: balanced binary TTN}) with $d'$ cores and bond dimensions $\mathbf r$.
	Let $r \coloneqq \max(\mathbf{r})$ be the maximum rank.
	Then $DF(\Theta)^\ast DF(\Theta)$ is represented by a matrix of with $\mathcal{O}(r^6 (d')^2)$ entries, since each core has size $\mathcal{O}(r^3)$ and there are $d'$ cores.
	Even for rather small networks, this results in linear systems which cannot be solved efficiently (for $r = 8$, $d'=15$, the system matrix has $\approx 2.9 \times 10^7$ entries).
	In general $DF(\Theta)^\ast DF(\Theta)$ is not sparse and does not have low-rank structure.	
	We propose three strategies for approximating the operator, focusing on multinomial regression, although most ideas apply more generally.
	For convenience, these will be presented for the analytical operator, although in practice we apply the strategies to the empirical approximations~\eqref{eq:empirical-dfdf} using specific modifications for functional TTNs as discussed in the previous subsections.

	\subsubsection*{Block-diagonal approximation of $DF(\Theta)^\ast DF(\Theta)$}
	
	Instead of computing the full operator $DF(\Theta)^\ast DF(\Theta)$, we approximate it with a block diagonal approach.
	This is similar to the K-FAC as proposed in \cite{martens15}, where a block-diagonal approximation to the Fisher information matrix is computed in the setting of neural networks.
	The block diagonal approximation results in $d'$ operators acting on the core tensors individually, that is, we obtain $d'$ systems of size $\mathcal{O}(r^6)$ (instead of $\mathcal{O}(r^6 (d')^2)$) which makes natural gradient descent tractable.
	We call the resulting algorithm \textbf{BD-ngrad}, its pseudocode is provided in \cref{algo:ngd-ttn-block-diag}.
	Recall that the parameter manifold $\cM$ is a Cartesian product of manifolds, that is, $\cM = \bigtimes_{k = 1}^{d'} U_k$ for some embedded manifolds $U_k$ (see \cref{eq:M-TT,eq:M-TTN}).
	In the pseudocode,
	\begin{equation}
		D \tau_k(\Theta): T_{\Theta_k} U_k \to T_{\tau(\Theta)} \cT 
		\ , \quad
		\zeta_k \mapsto \tau(\Theta_1, \ldots, \Theta_{k-1}, \zeta_k, \Theta_{k+1}, \ldots, \Theta_{d'})
	\end{equation}
	denotes the restriction of $D \tau(\Theta)$ to the manifold $U_k$, that is, the restriction to $k$-th \mbox{(block-)}column of $D \tau(\Theta)$.
	Clearly the block-diagonal of $DF(\Theta)^\ast DF(\Theta)$ is positive semidefinite.
	Hence, the (negative) solution of the block-diagonal system is still a descent direction for $\cR \circ F$ on $\cM$ (given that the current iterate $\Theta$ is not a critical point).
	 
	Note that the function $f \in \cH$ and the system matrix $W_k$ in \cref{algo:ngd-ttn-block-diag} are in practice never explicitly computed, which is signified by "$\coloneqq$" instead of "$\leftarrow$" in the algorithm.
	Instead, we solve the linear system with a conjugate gradient method and compute evaluations of $W_k$ as needed.
	
	Since for functional TTNs, $\cM$ is embedded in an Euclidean ambient space, the Riemannian gradient $g_k$ can be easily computed by first computing the Euclidean gradient $\grad (R \circ \bar{F})(\Theta)$, either explicitly or with automated differentiation and subsequently projecting onto the respective tangent space $T_\Theta \cM$.
	
	\begin{algorithm}[t]
		\DontPrintSemicolon
		\SetAlgoLined
		\KwIn{Risk $\cR: \cS \to \R$ given in \eqref{eq:mlr-risk}, Parametrization $\bar{F} = \sigma \circ G \circ \tau: \cM \to \cS$, Initial point $\Theta^{0} \in \cM$}
		$t \leftarrow 1$  \;
		\While{not converged}{
			Compute $\rgrad (\cR \circ \bar{F})(\Theta^{(t)})$ \;
			\For{$k = 1, \ldots, d'$}{
				$g_k \leftarrow (\rgrad (R \circ \bar{F}) (\Theta^{(t)}))_k$\;
				$f \coloneqq G(\tau(\Theta^{(t)}))$\;
				$W_k \coloneqq~\frac{1}{m}\sum_{i = 1}^m D\tau_k(\Theta^{(t)})^\ast \left(C(f(x^i)) \otimes \Phi_1(x_1^i) \Phi_1(x^i_1)^\T \otimes \ldots \otimes \Phi_d(x^i_d) \Phi_d(x^i_d)^\T \right) D \tau_k(\Theta^{(t)})$\;
				\vspace{-4mm}
				$\zeta_k^{(t)} \leftarrow$ solve $W_k^{} \zeta_k^{(t)} = g_k$\;
			}	
			$\zeta^{(t)} \leftarrow (\zeta_1^{(t)}, \ldots, \zeta_{d'}^{(t)})^\T$\;
			Choose step-size $\gamma^{(t)} > 0$\;
			$\Theta^{(t+1)} \leftarrow \rR_{\Theta^{(t)}}\left(- \gamma^{(t)} \cdot \zeta^{(t)}\right)$\;
			$t \leftarrow t + 1$\;
		}
		\Return $\Theta^{(t)}$\;
		\caption{BD-ngrad for multinomial logistic regression}
		\label{algo:ngd-ttn-block-diag}
	\end{algorithm}

	\subsubsection*{Rank-one approximation of $C(z)$ for multinomial logistic regression}
	
	In multinomial logistic regression, we need to compute the matrix $C(z) = D \sigma(z)^\ast \cF(\sigma(z)) D \sigma(z)$ for each sample $x$, where $z = \inner{\tau(\Theta)}{\Phi(x)}$.
	Instead of computing the full matrix, it is possible to compute a rank-one approximation instead.
	This approach was also used in \cite{martens15arxiv}.
	Note that 
	\begin{equation}
		C(z) 
		= D \sigma(z)^\ast \cF(\sigma(z)) D \sigma(z)
		= \sum_{j = 1}^{n_0} (D \sigma(z))_{:,j} \frac{1}{\sigma(z)_j} (D \sigma(z))_{:, j}^\T \ .
	\end{equation}
	We suggest to approximate $C(z)$ with
	\begin{equation}
		C(z) \approx \frac{(D \sigma(z))_{:,k} (D \sigma(z))_{:, k}^\T}{\sigma(z)_k} \eqqcolon \tilde{C}(z) \ , \label{eq:C-approx}
	\end{equation}
	where $j \in \{1, \ldots, n_0\}$ is sampled from the probability distribution $\sigma(z)$, that is, $P(j = k) = \sigma(z)_k$.
	This reduces the computational effort further by a factor of $n_0$.
	We call this block-diagonal approach with one-shot sampling \textbf{BDO-ngrad}.
	The resulting algorithm is identical to \cref{algo:ngd-ttn-block-diag} with the sole difference that we swap $C$ for $\tilde{C}$.

	\subsubsection*{Stochastic natural Riemannian gradient descent with fully-diagonal approximation}
	
	For larger datasets it is not possible to compute the gradient with respect to all samples at once.
	Instead, we apply mini-batch stochastic gradient descent.
	Since gradients obtained from mini-batches are in general less exact estimates of the true gradient, we use momentum to stabilize the descent.
	Since we optimize on a manifold, old momenta have to be transported to the new tangent space before adding them to current gradient iterates in
	an exponential decay averaging fashion according to a decay parameter $\beta_1$.
	This can be done through a transport map $\mathrm{T}_{\Theta_2 \leftarrow \Theta_1}: T_{\Theta_1} \cM \to T_{\Theta_2} \cM$ which projects the previous gradient onto the tangent space of the new point $\Theta_2$; for more details, see \eg \cite{boumal23}.
	
	Similarly, estimating the operator $DF(\Theta)^\ast DF(\Theta)$ from only a few samples is not expected to produce a good approximation.
	We suggest to use momentum here, too, again using exponential averaging with decay parameter $\beta_2$.
	Like in Riemannian BFGS this would entail transporting matrices to the new tangent space, too.
	While possible in theory, this can be difficult and inefficient in practice.
	In order to avoid the transport altogether, we propose a fully-diagonal approximation of the operator by approximating the $(k, k)$-block of $DF(\Theta)^\ast DF(\Theta)$ in the following way:
	\begin{equation}
		\left(DF(\Theta)^\ast DF(\Theta)\right)_{k, k} \approx \lambda^\mathrm{max}_k I \ ,
	\end{equation}
	where $\lambda^\mathrm{max}_k$ is the maximal eigenvalue of $\left(DF(\Theta)^\ast DF(\Theta)\right)_{k, k}$.
	Note that again the (negative) solution of the fully-diagonal system is a descent direction since the fully-diagonal system is positive definite.
	
	The obvious benefit of this fully-diagonal approximation is that we do not have to solve the natural gradient system \cref{eq:ngrad} with conjugate gradients, instead, the solution is given by simply dividing the right hand side by $\lambda^\mathrm{max}_k$.
	The eigenvalue $\lambda^\mathrm{max}_k$ itself can be estimated through power iteration.
	Numerical experiments (see \cref{sec:experiments}) show that even just a single iteration of the power method, starting from the current gradient as the initial point, is enough to improve learning.
	This overall leads to a significant speed-up as well.
	In practice, we combine this fully-diagonal approximation with the rank-one of $C(z)$ approximation from above.
	We call the combined algorithm \textbf{D-ngrad}, pseudocode is depicted in \cref{algo:ngd-d-ngrad}.

	\begin{algorithm}[t]
		\DontPrintSemicolon
		\SetAlgoLined
		\KwIn{Risk $\cR: \cS \to \R$ given in \eqref{eq:mlr-risk}, Parametrization $\bar{F} = \sigma \circ G \circ \tau: \cM \to \cS$, Initial point $\Theta^{0} \in \cM$, $\beta_1, \beta_2 \in (0, 1]$}
		$\lambda_k^{(0)} \leftarrow 1$ for $k = 1, \ldots, d'$\;
		$g_k^{(0)} \leftarrow 0$ for $k = 1, \ldots, d'$\;
		$t \leftarrow 1$\;
		\While{not converged}{
			\For{batch $B$ in batches}{
				Compute $\rgrad (\cR \circ \bar{F})(\Theta^{(t)})$ \;
				\For{$k = 1, \ldots, d'$}{
					$g_k \leftarrow (\rgrad (R \circ \bar{F}) (\Theta^{(t)}))_k$\;
					$f \coloneqq \bar{F}(\Theta^{(t)})$\;
					$W_k \coloneqq~\frac{1}{\abs{B}}\sum_{i \in B} D\tau_k(\Theta^{(t)})^\ast \left(\tilde{C}(f(x^i)) \otimes \Phi_1(x_1^i) \Phi_1(x^i_1)^\T \otimes \ldots \otimes \Phi_d(x^i_d) \Phi_d(x^i_d)^\T \right) D \tau_k(\Theta^{(t)})$\;
					\vspace{-5mm}
					$\lambda_k \leftarrow \frac{g_k^\T W_k g_k}{g_k^\T g_k}$\;
					$\lambda^{(t)}_k \leftarrow \beta_2 \lambda^{(t-1)}_k + (1 - \beta_2) \lambda_k$\;
					$\zeta_k^{(t)} \leftarrow \beta_1 \zeta_k^{(t-1)} + (1-\beta_1) \frac{g_k}{\lambda^{(t+1)}_k}$ \;
				}
				$\zeta^{(t)} \leftarrow (\zeta_1^{(t)}, \ldots, \zeta_{d'}^{(t)})^\T$\;
				Choose step-size $\gamma^{(t)} > 0$\;
				$\Theta^{(t+1)} \leftarrow \rR_{\Theta^{(t)}}\left(- \gamma^{(t)} \cdot \zeta^{(t)}\right)$\;
				$\zeta^{(t)} \leftarrow \mathrm{T}_{\Theta^{(t+1)} \leftarrow \Theta^{(t)}}(\zeta^{(t)})$\;
				$t \leftarrow t + 1$\;
			}
		}
		\Return $\Theta^{(t)}$\;
		\caption{D-ngrad}
		\label{algo:ngd-d-ngrad}
	\end{algorithm}

	While this approximation of $DF(\Theta)^\ast DF(\Theta)$ may seem extremely rough and to show little resemblance with the original derivation of the natural Riemannian gradient, we argue it is still derived from it in a systematic way.
	Note that the algorithm is similar, but not identical to the ADAM~\cite{kingma17} and Fisher ADAM~\cite{hwang24} algorithms. 
	These other approaches employ the so-called empirical Fisher information matrix, which importantly is no empirical approximation in the sense of \cref{eq:empirical linear equation}, although somewhat related to it (see e.g. \cite[Section 11]{martens20}).

	\section{Numerical experiments}
	\label{sec:experiments}
	In this section, we discuss implementation details and numerical experiments.

	\subsubsection*{Implementation details}

	For our experiments we use the balanced binary TTN format as described in \cref{ex: balanced binary TTN}, working directly with the TTN parameter space $\cM$,
	as opposed to the embedded manifold of low-rank tensors $\cT$.
	The evaluations of model responses and Riemannian gradients on $\cM$, which are required both for baseline algorithms as well as natural gradients,
	is done through forward/backpropagation on the tree tensor network, which are worked out in detail in~\cite[Sec.~6.2]{willner25}. Concretely, forward propagation calculates the empirical risk
	\begin{equation}
		(\widehat{\cR} \circ F)(\Theta) = \frac{1}{m} \sum_{i = 1}^m  \left(I_{n_0} \otimes \Phi_1(x^i_1)^T \otimes \cdots \otimes \Phi_d(x^i_d)^T \right) \tau(\Theta) \, .
	\end{equation}
	Naively, this would require the evaluation of $m$ high-dimensional tensor-vector products, but by vectorizing and leveraging the binary tree structure,
	it can be achieved through a recursion of MTTKRP (matricized-tensor-times-Khatri-Rao-product, coined by \citet{bader08})
	operations acting on the individual cores of the network. Similarly, through a series of MTTKRP,
	backpropagation evaluates
	\begin{equation} \label{eq: backprop}
		\rgrad (\widehat{\cR} \circ F)(\Theta)
		= \frac{1}{m} \sum_{i = 1}^m  D\tau(\Theta)^\ast \left[v^i \otimes \Phi_1(x^i_1) \otimes \cdots \otimes \Phi_d(x^i_d) \right] \, ,
	\end{equation}
	where $v^i = \nabla \cL_{y_i}(F(\Theta)(x_i))$ for $\cL_{y}: \R^{n_0} \to \R$ with $\cL_{y}(h(x)) = \ell(h, x, y)$. 
	Those workloads, as well as other computationally intensive subtasks are handled in parallel through a dedicated \texttt{C++} library
	This library interfaces with a \texttt{Python} library which acts on higher abstraction levels, implementing the TTN network architecture and the presented descent algorithms themselves.

	A comment about the computation the empirical system matrices in \eqref{eq: empirical system l2} and \eqref{eq: empirical system logistic} is in order.
	They both involve diagonal matrices, $I_{n_0}$ and $C(z^i)$ respectively.
	We denote these matrices by $\Delta^i \in \R^{n_0 \times n_0}$ and their diagonal entries $\Delta^i_j$, $j = 1, \ldots, n_0$. 
	Writing
	\begin{equation}
		\xi^i_j = D\tau(\Theta)^\ast \left[ e_j \sqrt{\Delta^i_{j}} \otimes \Phi_1(x_1^i) \otimes \ldots \otimes \Phi_d(x^i_d) \right]
	\end{equation}
	with $e_j$ the $j$-th Cartesian unit vector, the empirical version of the system matrix reads
	\begin{equation}
		DF(\Theta)^\ast DF(\Theta) \approx \sum_{j=1}^{n_0} \frac{1}{m}\sum_{i = 1}^m  \xi^i_j (\xi^i_j )^T.
	\end{equation}
	By setting $v^i = e_j \sqrt{\Delta^i_{j}}$ the vectors $\xi^i_j$ can be efficiently evaluated with an additional backpropagation step~\eqref{eq: backprop}.
	All of our algorithms employ this approach of representing the full system matrix as the above sum of rank-one terms.
	Of course, the full matrix is never computed explicitly.
	For solving the the linear system we instead use the matrix-free CG solver.
	Note that for the BD-ngrad approximation, only the block-diagonal parts of the outer products have to be considered.
	For the BDO-ngrad approximation the sum over $j$ additionally reduces to a single term.
	In D-ngrad the eigenvalues of the blocks are estimated by directly applying the power method in the rank-one decomposition format.

	In our implementation both standard and natural Riemannian gradients are further subjugated to the quotient manifold formalism described in~\cref{rem: quotient formalism}.
	In particular, we employ the Cartesian horizontal space $H^\equiv \cM$.
	This means solving an empirical version of the system in \cref{eq:p-equiv-ngrad-system}, that is
	\begin{equation}
		P_\Theta^\equiv \left(\frac{1}{m} \sum_{i = 1}^m DF_{x^i}(\Theta)^\ast DF_{x^i}(\Theta)\right) P_\Theta^\equiv[\zeta]
		= P_\Theta^\equiv \left(\frac{1}{m} \sum_{i=1}^m \rgrad \ell(F(\cdot), x^i, y^i)(\Theta)\right) \ .
	\end{equation}
	The operator on the left hand side need not always have full rank.
	In our experiments, we almost never observed the operator on the left hand side to be of full rank.
	This is because the number of samples employed were too small to achieve full rank in the above rank-one formulation: $m < \dim(M) = \dim(\text{range}(DF(\Theta)^\ast))$.
	Therefore we regularize the system according to
	\begin{equation}
		P_\Theta^\equiv DF(\Theta)^\ast DF(\Theta) P_\Theta^\equiv
		\approx P_\Theta^\equiv \left(\frac{1}{m} \sum_{i = 1}^m DF_{x^i}(\Theta)^\ast DF_{x^i}(\Theta) + \lambda I \right) P_\Theta^\equiv \ .
	\end{equation}
	In all experiments we choose the regularization parameter as $\lambda = 5 \times 10^{-3}$ and solve the system
	on $H_\Theta^\equiv \cM$ using conjugate gradients,
	since the regularized operator is symmetric positive definite on the horizontal space.

	Since the computed natural Riemannian gradients are in $H_\Theta^\equiv \cM$, we can use the computationally inexpensive QR-based retraction described in~\cite[Sec. 4.4]{willner25}.
	A suitable vector transport map for~$\cM$, which is also compatible with the quotient structure is given by
	\begin{equation}
		\mathrm{T}_{\Theta_2 \leftarrow \Theta_1}: H_{\Theta_1}^\equiv \cM \to H_{\Theta_2}^\equiv \cM\ , \quad
		\mathrm{T}_{\Theta_2 \leftarrow \Theta_1} = P_{\Theta_2}^\equiv \ ,
	\end{equation}
	that is, the projection onto to the Cartesian horizontal space at $\Theta_2$ (see \eg \cite[Ex.~10.67]{boumal23}).
	We use this construction to transport momentum vectors to new iterates for the stochastic experiments.
	All experiments were conducted on an \texttt{Intel Ultra 7 155H} CPU complemented with 32GB of DDR5 RAM.

	\subsection{Least-squares recovery problem}\label{sec:Least-squares recovery problem}

	As a basic proof-of-concept, we consider a simple TTN recovery problem with least-squares loss.
	As training objective, we generate a randomly initialized balanced binary TTN from \cref{ex: balanced binary TTN} with $3$ cores as parameter $\Theta \in \cM$, TTN-ranks $\mathbf r =(5, 5)$ and output dimension $n_0 = 3$.
	The resulting TTN therefore describes a function $h^\ast = F(\Theta): \R^4 \to \R^3$.
	For the bases, we choose monomial bases of order $2$, which means that $h^\ast_j \in \cV = \R[x]_2 \otimes \R[x]_2 \otimes \R[x]_2 \otimes \R[x]_2$ for $j=1,2,3,4$.
	The training target is to recover the function $h^\ast$ from noisy training data.

	The inputs of the training data ($m = 256$) are generated by sampling uniformly from $[-1, 1]^4$, that is, we have $\QX(X) = \prod_{i=1}^4 \mu^{\textrm{uniform}}_{[-1, 1]}(X_i)$ for $X = X_1 \times X_2 \times X_3 \times X_4$.
	The targets are chosen as $y^i = h^\ast(x^i) + \epsilon^i$, where the components of $\epsilon^i \in \R^3$ are sampled from a centered Gaussian distribution with variance $\sigma^2 = 2.5 \times 10^{-3}$.
	A perfect recovery of $h^\ast$ would therefore result in an expected training loss of about $3 \sigma^2 = 7.5 \times 10^{-3}$.

	As the starting iterate for the experiments, we take a second randomly generated TTN $h_0: \R^4 \to \R^3$ of the similar type with parameter $\Theta_0 \in \cM$.
	For each individual experiment we choose identical basis vectors $\Phi_\nu$ for all $\nu=1, \ldots, 4$, but vary the concrete basis of $\R[x]_2$.
	In particular, we conduct experiments using a monomial basis, a normalized Legendre basis (which is an ONB of $\R[x]_2 \otimes \R[x]_2 \otimes \R[x]_2 \otimes \R[x]_2$ in $L_2(\R^4, \R; \QX)$) and a Hermite basis.
	For the different bases the initial point $\Theta_0$ is rescaled such that it always represents the same~$h_0$.
	In the experiments we compare standard Riemannian gradient descent (grad) and natural Riemannian gradient descent (ngrad) for the different bases.
	For choosing step sizes, we employ a two-way backtracking line search according to the Armijo-Goldstein criterion, see, e.g. \cite[Section 4.5]{boumal23}.

	Results for one instance are displayed in~\cref{fig:change_of_basis}.
	It can be observed that the natural gradient methods react less sensitive to a change of basis than standard gradient methods, always outperforming their respective counterpart.
	While in theory the natural Riemannian gradient is independent of the choice of basis, this cannot exactly be observed in practice.
	Possible reasons for this are discussed at the end of \cref{sec:low-rank-model}.
	Out of the standard Riemannian gradient methods, the Legendre basis descents fastest.
	This is not surprising, since the orthonormal Legendre polynomials are optimally conditioned w.r.t. $\QX$, which implies that at least the function $G$ in the parametrization $F = G \circ \tau$ is an isometry and should also result in a well-conditioned $DF(\Theta)$,
	cf.~\cref{remark: meaure product structure}.

	\begin{figure}[t]
		\begin{center}
			\includesvg[width=0.5\textwidth]{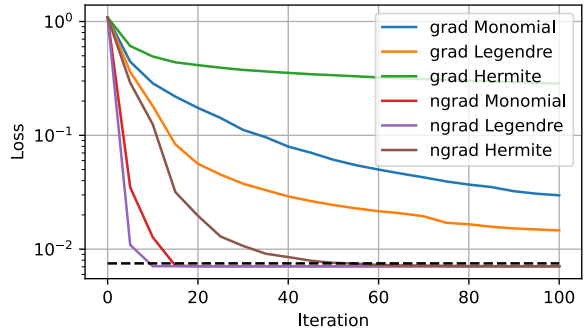}
			\caption{Comparison of grad and ngrad methods for a recovery problem under change of basis. The natural Riemannian gradient methods achieve the expected minimum loss (black line).}
			\label{fig:change_of_basis}
		\end{center}
	\end{figure}

	\subsection{Deterministic multinomial logistic regression}
	\label{sec:deterministic-logistic-regression}

	We evaluate the deterministic algorithms in the multinomial classification setting on the \texttt{digits} dataset \cite{alpaydin98}, which consists of $m=1726$ grayscale images of hand-written digits that are to be classified into $n_0 = 10$ classes.
	Each image consists of $8\times 8$ pixels, so we pick $d=64$.
	We employ a $80/20$ train-test split.
	The target labels of the training data are encoded as one-hot vectors in $\R^{10}$.
	We choose the basis vectors $\Phi_\nu(x) = \frac{(1, x)^T}{\norm{(1, x)}}$ for all $\nu = 1, \ldots, d$, which we adapted from \cite{stoudenmir18}, and can be interpreted as a affine linear basis with normalization.
	We observed that this basis works well in practice, compared to unnormalized bases.
	A suitable starting iterate and TTN-ranks $\mathbf{r}$ were found by using the unsupervised coarse-graining method proposed by \citet{stoudenmir18}, with maximum rank $8$.

	\begin{figure}[t]
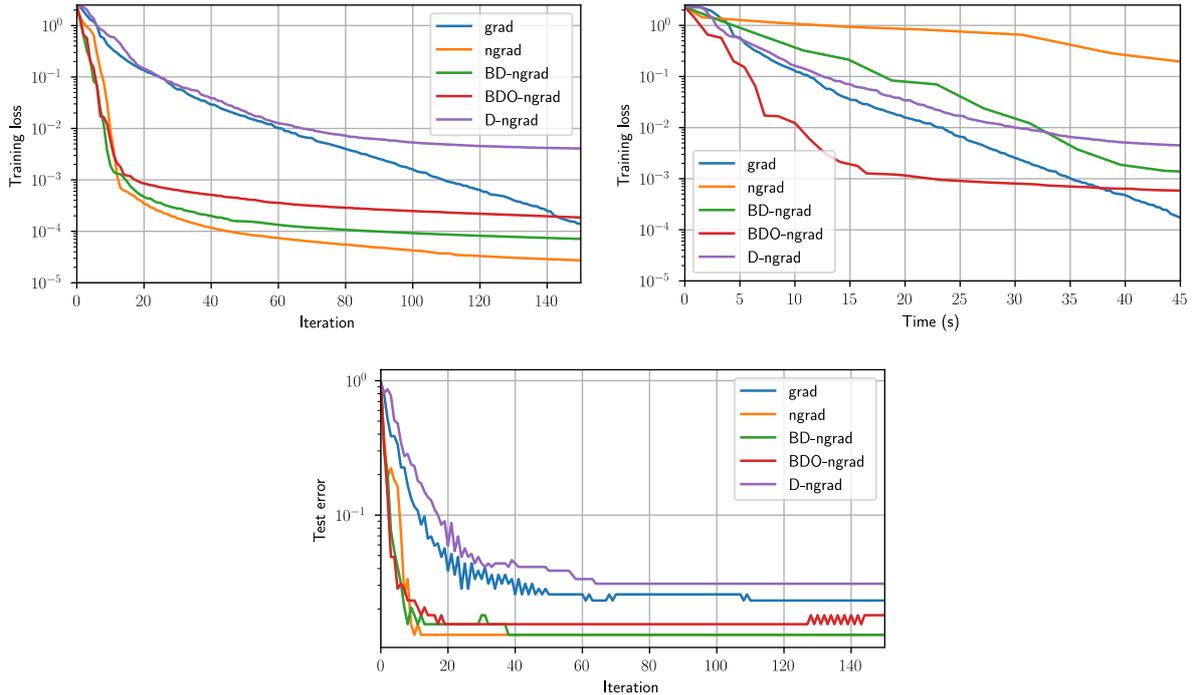

		\centering
		\begin{minipage}[t]{0.5\textwidth}
			\centering
			\includesvg[width=\linewidth]{loss.svg}
		\end{minipage}%
		\begin{minipage}[t]{0.5\textwidth}
			\centering
			\includesvg[width=\linewidth]{performance.svg}
		\end{minipage}

		\begin{minipage}{0.5\textwidth}
			\centering
			\includesvg[width=\linewidth]{accuracy.svg}
		\end{minipage}
		\caption{Comparison of standard Riemannian gradient descent (grad), natural Riemannian gradient descent (ngrad), BD-ngrad, BDO-ngrad and D-ngrad for the \texttt{digits} dataset.}
		\label{fig:digits}
	\end{figure}

	In the experiments we compare standard Riemannian gradient descent, natural Riemannian gradient descent, BD-ngrad and BDO-ngrad.
	For completeness, we also compare a non-stochastic variant of D-ngrad, where we only have a single batch of size $m$ and only use momentum for the eigenvalues but not for the gradient (which also means there is no transport of gradients).
	Decay parameters were chosen $\beta_1 = 0$ and $\beta_2 = 0.9$.
	In all algorithms, we use a two-way backtracking line search according to the Armijo-Goldstein criterion to choose step sizes.

	Results of our experiments are displayed in \Cref{fig:digits}.
	The top row shows training loss plotted against the number of iterations (left) and time (right).
	As expected, the proposed hierarchy of approximations subsequently reduces computational effort at the cost of deteriorating convergence.
	It can be observed that gradient descent overtakes the natural gradient methods at some point (top-right plot).
	Note however that this happens only when the natural gradient methods have already converged in terms of test accuracy (as seen in the bottom-right figure) and that this makes little to no difference in the final test accuracies after 500 iterations, which are reported in \cref{tab:accuracies_digits}.
	\begin{table}[t]
		\centering
		\begin{tabular}{|c|c|c|c|c|}
			\hline
			grad & ngrad & BD-ngrad & BDO-ngrad & D-ngrad\\
			\hline
			98.71\% & 98.71 \% & 98.71 \% & 98.20 \% & 97.17 \% \\ \hline
		\end{tabular}
		\caption{Final test accuracies on \texttt{digits} after 500 iterations}
		\label{tab:accuracies_digits}
	\end{table}

	\subsection{Stochastic multinomial logistic regression}
	\label{sec:deterministic-logistic-regression}

	In order to investigate the stochastic setting, we conduct experiments for the larger \texttt{MNIST} dataset \cite{lecun98}.
	This dataset consists of pictures of handwritten digits that are again to be classified into on of $n_0 = 10$ classes.
	The test setup is mostly identical to that in \cref{sec:deterministic-logistic-regression}; we only highlight differences here.
	\texttt{MNIST} pictures have a resolution of $28 \times 28$ pixels, which would require an unbalanced binary TTN.
	Although unbalanced trees are both theoretically and practically viable, we scale down the samples to $16 \times 16$, allowing the use of a balanced tree with $d = 256$.
	We employ the same feature map as in \cref{sec:deterministic-logistic-regression} and again use the unsupervised coarse-graining method \cite{stoudenmir18} for initialization, this time with maximum tree tensor ranks of 16.
	Again, we employ a random 80/20 train-test-split.

	In our experiments, we compare stochastic Riemannian gradient descent with momentum (grad) and D-ngrad.
	For both algorithms we use batch sizes of $128$ and fixed step sizes $\gamma = 16$ for grad and $\gamma = 4$ for D-ngrad, which were found using a grid search.
	The momentum decay parameters were chosen as $\beta_1 = \beta_2 = 0.9$.

	The plots in \cref{fig:mnist} compare the training loss of grad and D-ngrad for this setup.
	It can be observed that the natural gradient method outperforms the classical approach, both in terms of number of iterations and runtime.
	Furthermore, D-ngrad also achieves a better qualitative result:
	After $1000$ iterations, the final test accuracies are $87.13\%$ for grad and $96.64\%$ for D-ngrad.

	\begin{figure}[t]
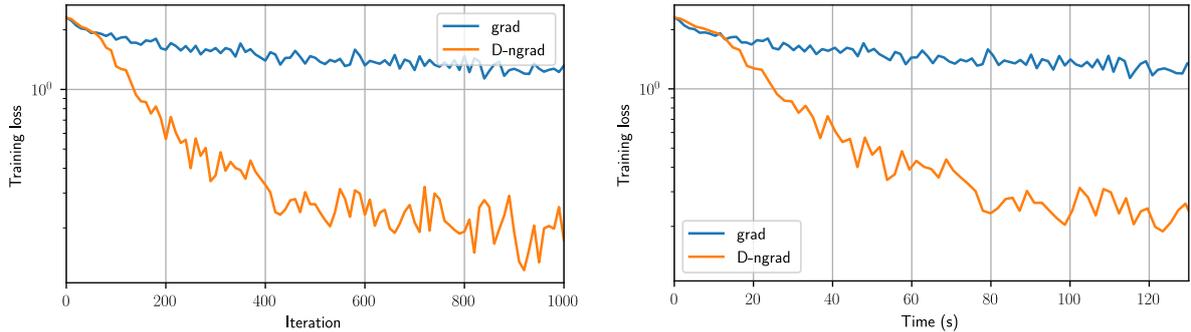

		\centering
		\begin{minipage}[t]{0.5\textwidth}
			\centering
			\includesvg[width=\linewidth]{loss_stochastic.svg}
		\end{minipage}%
		\begin{minipage}[t]{0.5\textwidth}
			\centering
			\includesvg[width=\linewidth]{performance_stochastic.svg}
		\end{minipage}
		\caption{Comparison of stochastic Riemannian gradient descent (grad) and D-ngrad for the \texttt{MNIST} dataset.}
		\label{fig:mnist}
	\end{figure}

	\section{Outlook}
	In this work we applied the concept of a natural gradient to machine learning tasks with functional TTNs as the learning model.
	We derived formulas for computing the natural Riemannian gradient both for least-squares regression and multinomial logistic regression and proposed several approximations to the natural gradient that lead to efficient optimization algorithms.
	The convergence of these methods, depending on the level of approximation, is still an open question.
	
	Since our algorithms all work with \emph{fixed} manifolds $\cM$, $\cT$ and $\cH$, choosing and fixing the bond dimensions of the TTN is required a priori, \ie, when designing the model and in particular \emph{before} optimization.
	However, it is not clear how to best choose the bond dimensions of the network to achieve a given loss or accuracy.
	Ideally, bond dimensions would be chosen automatically during optimization, which is, however, not directly possible with the algorithms suggested in this work.
	The design of a rank-adaptive algorithm based on natural Riemannian gradient descent is left for future research.
	
	Functional tensor networks can in theory also be composed as layers to form larger and potentially more expressive models.
	For functional tensor trains, such a compositional model was considered in \cite{schneideroster24,eigel25}.
	The ideas for approximating the natural Riemannian gradient presented in this work could also be useful in a compositional functional (tree) tensor framework and lead to more efficient optimization algorithms.
	However, this is left for future research.
	
	\subsubsection*{Acknowledgments}
	
	The authors would like to thank Timo Felser and Tensor AI Solutions for providing the code framework that allowed the numerical evaluation of our findings.
	The work of A.U.~was supported by the Deutsche Forschungs\-gemeinschaft (DFG, German Research Foundation) – Projektnummer 506561557.

	\renewcommand{\bibname}{References}
	\bibliographystyle{plainnat}
	\bibliography{bibliography}
	
\end{document}